 \newtheoremstyle{mytheorem}
 {3pt}
 {3pt}
 {\slshape}
 {}
 {\bfseries}
 {.}
 { }
 {}
\numberwithin{equation}{section}
\theoremstyle{theorem}
\newtheorem{theorem}{Theorem}[section]
\newtheorem{lemma}[theorem]{Lemma}
\newtheorem*{theorem*}{Theorem}
\theoremstyle{definition}
\newtheorem{remark}{Remark}[section]
\newcommand{\arxiv}[1]{\href{http://arxiv.org/abs/#1}{arXiv:#1}}
\newcommand{\Keywords}[1]{\ifthenelse{\isempty{#1}}{}{\smallskip \smallskip \noindent \textbf{Keywords}. #1}}
\newcommand{\MSC}[2][2010]{\ifthenelse{\isempty{#2}}{}{\smallskip \smallskip \noindent \textbf{#1MSC}. #2}}
\newcommand{\abstractnote}[1]{\ifthenelse{\isempty{#1}}{}{\smallskip \smallskip \noindent \textsuperscript{\dag}#1}}
\def\specialsection{\@startsection{section}{1}%
  \z@{\linespacing\@plus\linespacing}{.5\linespacing}%
  {\normalfont}}
\def\section{\@startsection{section}{1}%
  \z@{.7\linespacing\@plus\linespacing}{.5\linespacing}%
  {\normalfont\scshape}}
\patchcmd{\@settitle}{\uppercasenonmath\@title}{\Large\boldmath}{}{}
\patchcmd{\@settitle}{\begin{center}}{\begin{flushleft}}{}{}
\patchcmd{\@settitle}{\end{center}}{\end{flushleft}}{}{}
\patchcmd{\@setauthors}{\MakeUppercase}{\normalsize}{}{}
\patchcmd{\@setauthors}{\centering}{\raggedright}{}{}
\patchcmd{\section}{\scshape}{\large\bfseries\boldmath}{}{}
\patchcmd{\subsection}{\bfseries}{\bfseries\boldmath}{}{}
\renewcommand{\@secnumfont}{\bfseries}
\patchcmd{\@startsection}{\@afterindenttrue}{\@afterindentfalse}{}{}
\patchcmd{\abstract}{\leftmargin3pc}{\leftmargin1pc}{}{}
\def\maketitle{\par
  \@topnum\z@ 
  \@setcopyright
  \thispagestyle{empty}
  \ifx\@empty\shortauthors \let\shortauthors\shorttitle
  \else \andify\shortauthors
  \fi
  \@maketitle@hook
  \begingroup
  \@maketitle
  \toks@\@xp{\shortauthors}\@temptokena\@xp{\shorttitle}%
  \toks4{\def\\{ \ignorespaces}}
  \edef\@tempa{%
    \@nx\markboth{\the\toks4
      \@nx\MakeUppercase{\the\toks@}}{\the\@temptokena}}%
  \@tempa
  \endgroup
  \c@footnote\z@
  \@cleartopmattertags
}
\newcommand{\f}[1]{\ifthenelse{\equal{#1}{1}}{(q;q)_\infty}{(q^{#1};q^{#1})_{\infty}}}
\newcommand{\ee}[1]{e\left(#1\right)}
\newcommand{\bfm}{\mathbf{m}}
\newcommand{\bfd}{\boldsymbol{\delta}}
\newcommand{\cl}{\mathcal{L}}
\newcommand{\lcm}{\mathrm{lcm}}
\title[Asymptotics for eta-quotients]{Asymptotics for the Fourier coefficients of eta-quotients}
\author[S. Chern]{Shane Chern}
\address[S. Chern]{Department of Mathematics, The Pennsylvania State University, University Park, PA 16802, USA}
\email{shanechern@psu.edu}
\date{}
\begin{document}

%

\maketitle

\begin{abstract}

We study the asymptotics for the Fourier coefficients of a broad class of eta-quotients,
$$\prod_{r=1}^R \left(\prod_{k\ge 1}\left(1-q^{m_r k}\right)\right)^{\delta_r},$$
where $m_1,\ldots,m_R$ are $R$ distinct positive integers and $\delta_1,\ldots,\delta_R$ are $R$ non-zero integers with $\sum_{r=1}^R \delta_{r}\ge 0$.

\Keywords{Eta-quotients, asymptotics, circle method.}

\MSC{11P55, 11P82.}
\end{abstract}

\section{Introduction}

\subsection{Background}

Since the pioneering work of Hardy and Ramanujan \cite{HR1916}, and Rademacher \cite{Rad1937,Rad1943}, there has been an intensive study of the asymptotic behaviors of the Fourier coefficients of eta-quotients. Here what Hardy and Ramanujan as well as Rademacher treated has important combinatorial background:

A partition of a positive integer $n$ is a non-increasing sequence of positive integers whose sum equals $n$. Let $p(n)$ count the number of partitions of $n$ with the convention that $p(0)=1$. It is well known that
\begin{equation}\label{eq:part-gf}
\sum_{n\ge 0}p(n)q^n=\frac{1}{(q;q)_\infty}.
\end{equation}
Here and in the sequel, we adopt the standard $q$-series notation:
\begin{align*}
(a;q)_\infty&:=\prod_{k\ge 0} (1-aq^{k}).
\end{align*}
It is necessary to mention that the generating function \eqref{eq:part-gf} is holomorphic on the open unit disk $\mathbb{D}\subset \mathbb{C}$, namely, $|q|<1$.

Rademacher showed that
\begin{align}
p(n)=\frac{1}{2\sqrt{2} \pi}\sum_{k\ge 1}A_k(n)\sqrt{k}\;\frac{d}{dn}\left(\frac{2}{\sqrt{n-\frac{1}{24}}}\sinh\left(\frac{\pi}{k}\sqrt{\frac{2}{3}\left(n-\frac{1}{24}\right)}\right)\right),
\end{align}
where
$$A_k(n)=\sum_{\substack{0\le h< k\\ (h,k)=1}}e^{\pi i(s(h,k)-2nh/k)}$$
with $s(h,k)$ being the Dedekind sum defined in \eqref{eq:dedekind-sum}.

Rademacher's approach is straightforward in essence, however delicate in detail. The basic idea is merely Cauchy's integral formula, but we need various techniques including Ford circles, Farey sequences, modular symmetry and the Dedekind eta-function.

One may naturally consider the following general family of holomorphic functions on the open unit disk $\mathbb{D}$:
\begin{equation}\label{eq:eta-quo}
G(q)=G(e^{2\pi i \tau}):=\prod_{r=1}^R (q^{m_r};q^{m_r})_{\infty}^{\delta_r},
\end{equation}
where $\bfm=(m_1,\ldots,m_R)$ is a sequence of $R$ distinct positive integers and $\bfd=(\delta_1,\ldots,\delta_R)$ is a sequence of $R$ non-zero integers. This can be treated as an eta-quotient as the Dedekind eta-function is defined by (with $q=e^{2\pi i \tau}$)
$$\eta(\tau):=q^{\frac{1}{24}}\f{1}.$$

If we write
\begin{equation}
G(q)=\sum_{n\ge 0} g(n) q^n,
\end{equation}
then $g(n)$ may be related to certain combinatorial quantity in the theory of partitions. We notice that Rademacher's approach for the asymptotic formula of $p(n)$ can be easily adapted to study the asymptotics of $g(n)$ provided $\sum_{r=1}^R \delta_{r}<0$. For some particular $G(q)$, the interested readers may refer to the work of Grosswald \cite{Gro1958}, Hagis Jr.~\cite{Hag1963,Hag1965,Hag1966}, Iseki \cite{Ise1960,Ise1961} and many others. On the other hand, when $\sum_{r=1}^R \delta_{r}\ge 0$, a variance of Rademacher's method provided by O-Y.~Chan \cite{Cha2005} works. Chan used this method to prove several conjectures of Andrews and Lewis \cite{AL2000} concerning rank/crank differences modulo $3$ and $4$. We refer the readers to \cite{CTW2018,KN2014,Kim2018} for other related papers.

For general $G(q)$ with $\sum_{r=1}^R \delta_{r}<0$, the recent work of Sussman \cite{Sus2017} presented a Rademacher-type formula. Sussman's result can in some sense be treated as a special case of the work of Bringmann and Ono \cite{BO2012}, in which the coefficients of harmonic Maass forms are studied. On the other hand, Sills \cite{Sil2010} provided an automatic algorithm when $\sum_{r=1}^R \delta_{r}=0$. When $\sum_{r=1}^R \delta_{r}=1$, B.~Kim \cite{Kim2012} studied a subclass of such $G(q)$ with the aid of Chan's method.

The purpose of this paper is to obtain asymptotic formulas of Fourier coefficients of a broad class of eta-quotients $G(q)$ with $\sum_{r=1}^R \delta_{r}\ge 0$.

\subsection{The main result and some remarks}

Assuming that $k$ and $h$ are positive integers with $\gcd(h,k)=1$, we put
\begin{equation}
\begin{aligned}
\Delta_1 = -\frac{1}{2}\sum_{r=1}^R \delta_{r},& \qquad \Delta_2 = \sum_{r=1}^R m_r \delta_r,\\
\Delta_3(k) = -\sum_{r=1}^R \frac{\delta_r \gcd^2(m_r,k)}{m_r},& \qquad \Delta_4(k) = \prod_{r=1}^R \left(\frac{m_r}{\gcd(m_r,k)}\right)^{-\frac{\delta_r}{2}},
\end{aligned}
\end{equation}
and
\begin{align}
\omega_{h,k}=\exp\left(-\pi i\sum_{r=1}^R \delta_r\; s\left(\frac{m_r h}{\gcd(m_r,k)},\frac{k}{\gcd(m_r, k)}\right)\right),
\end{align}
where $s(d,c)$ is the Dedekind sum defined by
\begin{equation}\label{eq:dedekind-sum}
s(d,c):=\sum_{n \bmod{c}} \bigg(\bigg(\frac{dn}{c}\bigg)\bigg)\bigg(\bigg(\frac{n}{c}\bigg)\bigg)
\end{equation}
with
$$((x)):=\begin{cases}
x-\lfloor x\rfloor -1/2 & \text{if $x\not\in \mathbb{Z}$},\\
0 & \text{if $x\in \mathbb{Z}$}.
\end{cases}$$

Let $L=\lcm(m_1,\ldots,m_R)$. We divide the set $\{1,2,\ldots,L\}$ into two disjoint subsets:
\begin{align*}
\cl_{>0}&:=\{1\le \ell \le L \; :\; \Delta_3(\ell)>0\},\\
\cl_{\le 0}&:=\{1\le \ell \le L \; :\; \Delta_3(\ell)\le0\}.
\end{align*}

Now we are ready to present the main result.

\begin{theorem}\label{th:main}
If $\Delta_1\le 0$ and the inequality
\begin{equation}\label{eq:assump}
\min_{1\le r\le R}\left(\frac{\gcd^2(m_r,\ell)}{m_r}\right)\ge \frac{\Delta_3(\ell)}{24}
\end{equation}
holds for all $1\le \ell \le L$, then for positive integers $n>-\Delta_2/24$, we have
\begin{align}
g(n)&=E(n)+\sum_{\ell\in \cl_{>0}} 2\pi\;\Delta_4(\ell) \left(\frac{24n+\Delta_2}{\Delta_3(\ell)}\right)^{-\frac{\Delta_1+1}{2}}\nonumber\\
&\quad\quad\quad\quad\quad \times\sum_{\substack{1\le k\le N\\k\equiv_L \ell}}\frac{I_{-\Delta_1-1}\left(\frac{\pi }{6k}\sqrt{\Delta_3(\ell)(24n+\Delta_2)}\right)}{k} \sum_{\substack{0\le h< k\\ (h,k)=1}} \omega_{h,k}\; \ee{-\frac{nh}{k}},
\end{align}
where
\begin{equation}
N=\left\lfloor \sqrt{2\pi \left(n+\frac{\Delta_2}{24}\right)}\right\rfloor,
\end{equation}
\begin{align}
|E(n)|\ll_{\bfm,\bfd}\begin{cases}
1 & \text{if $\Delta_1=0$},\\
\left(n+\frac{\Delta_2}{24}\right)^{1/4} & \text{if $\Delta_1=-\frac{1}{2}$},\\
\left(n+\frac{\Delta_2}{24}\right)^{1/2} \log \left(n+\frac{\Delta_2}{24}\right)& \text{if $\Delta_1=-1$},\\
\left(n+\frac{\Delta_2}{24}\right)^{-\Delta_1-1/2} & \text{if $\Delta_1\le -\frac{3}{2}$},\\
\end{cases}
\end{align}
and $I_s(x)$ is the modified Bessel function of the first kind.
\end{theorem}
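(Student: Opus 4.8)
The plan is to run the Hardy--Ramanujan--Rademacher circle method in the form adapted by Chan \cite{Cha2005} to the regime $\sum_r\delta_r\ge 0$ (equivalently $\Delta_1\le 0$). First I would start from Cauchy's formula
\[
g(n)=\frac{1}{2\pi i}\oint_{\mathcal C}\frac{G(q)}{q^{n+1}}\,dq,
\]
taking $\mathcal C$ to be a circle of radius $e^{-2\pi/N^2}$ and dissecting it by the Farey fractions $h/k$ of order $N$ through the Ford-circle construction. On the arc attached to $h/k$ I substitute $\tau=(h+iz)/k$, so that each arc integral becomes $-\tfrac{2\pi}{k}\int G\,e^{-2\pi i n\tau}\,dz$ over a short $z$-contour whose endpoints tend to $0$ as $N\to\infty$, with $\abs{z}\asymp k/N$ and $\Re(1/z)$ bounded below. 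The choice $N=\lfloor\sqrt{2\pi(n+\Delta_2/24)}\rfloor$ is dictated by balancing the principal term against the error, exactly as in Rademacher's treatment of $p(n)$.

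Next I would convert $G$ to eta-functions: since $(q^{m_r};q^{m_r})_\infty=e^{-\pi i m_r\tau/12}\eta(m_r\tau)$, we have $G(q)=e^{-\pi i\tau\Delta_2/12}\prod_r\eta(m_r\tau)^{\delta_r}$. Writing $g_r=\gcd(m_r,k)$, one checks that $m_r\tau=(h_r+iZ_r)/k_r$ with $k_r=k/g_r$, $h_r=(m_r/g_r)h$ coprime to $k_r$, and $Z_r=(m_r/g_r)z$. Applying the modular transformation of $\eta$ to each factor sends $Z_r\mapsto Z_r^{-1}$, i.e.\ replaces the relevant nome by one that is exponentially small as $z\to 0^+$, and produces a unit-modulus factor, a power $Z_r^{-1/2}$, and a dominant exponential. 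Collecting these across $r$ and combining with $e^{-\pi i\tau\Delta_2/12}$ and with $e^{-2\pi i n\tau}$, the principal part of the integrand takes the shape
\[
\omega_{h,k}\,\ee{-\tfrac{nh}{k}}\;\Delta_4(k)\,z^{\Delta_1}\,\exp\!\left(\frac{\pi\Delta_3(k)}{12kz}+\frac{2\pi z}{k}\Big(n+\tfrac{\Delta_2}{24}\Big)\right),
\]
using that $\prod_r Z_r^{-\delta_r/2}=\Delta_4(k)\,z^{\Delta_1}$. Matching the unit-modulus terms requires the standard phase bookkeeping: the Dedekind-sum pieces assemble into $\omega_{h,k}$ exactly as defined, while the $\tfrac{a+d}{12c}$-type phases of the eta transformation cancel against the $\Delta_2$-prefactor and the $h_r'$-phases, leaving only $\ee{-nh/k}$. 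Because $\Delta_3(k)$ and $\Delta_4(k)$ depend on $k$ only through the $\gcd(m_r,k)$, they are functions of $k\bmod L$, which is what produces the decomposition of $\{1,\dots,L\}$ into $\cl_{>0}$ and $\cl_{\le 0}$: the exponential $\exp(\pi\Delta_3(k)/(12kz))$ grows as $z\to 0^+$ precisely on the major arcs $k\equiv_L\ell$ with $\ell\in\cl_{>0}$.

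On each major arc I would extract the Bessel function in the usual way: complete the short $z$-contour to the loop underlying Hankel's representation $I_s(x)=\tfrac{1}{2\pi i}(x/2)^s\int t^{-s-1}e^{t+x^2/(4t)}\,dt$, estimate the added pieces, and thereby turn $\int z^{\Delta_1}\exp(\tfrac{\pi\Delta_3}{12kz}+\tfrac{2\pi z}{k}\nu)\,dz$ (with $\nu=n+\Delta_2/24$) into $I_{-\Delta_1-1}$ evaluated at $\tfrac{\pi}{6k}\sqrt{\Delta_3(\ell)(24n+\Delta_2)}$, carrying the prefactor $2\pi\Delta_4(\ell)(\tfrac{24n+\Delta_2}{\Delta_3(\ell)})^{-(\Delta_1+1)/2}/k$; summing over $h$ coprime to $k$ and over $k\equiv_L\ell$, $k\le N$, yields the stated main term. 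The remaining and hardest task is to show that everything else is absorbed into $E(n)$ with the claimed size. Three error sources must be controlled: the minor arcs $\Delta_3(k)\le 0$, whose principal parts do not grow; the truncation error from completing the Hankel contours; and, most delicately, the non-principal terms obtained by expanding each $\prod_r(\tilde q_r;\tilde q_r)_\infty^{\delta_r}=1+O(\tilde q_r)$ beyond its leading $1$. Replacing the leading $1$ by a correction of order $\tilde q_r$ multiplies the exponent by $\exp(-2\pi g_r^2/(km_rz))$, turning the coefficient $\Delta_3(k)$ into $\Delta_3(k)-24\gcd^2(m_r,k)/m_r$; hypothesis \eqref{eq:assump} is exactly the assertion that this is $\le 0$, so every correction term has non-positive exponent and cannot overwhelm the principal contribution. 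With this in hand the errors are estimated by the standard bounds $\abs{z}\asymp k/N$, $\#\{h:(h,k)=1\}\le k$, and $N\asymp\sqrt\nu$; the four cases for $\abs{E(n)}$ --- including the logarithm at the borderline $\Delta_1=-1$ --- emerge from how the factor $z^{\Delta_1}$ interacts with the sum over $k\le N$, with $\Delta_1\le 0$ ensuring these sums converge to the indicated orders. I expect this error analysis, rather than the main-term computation, to be the crux of the proof.
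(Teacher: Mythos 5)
Your proposal is correct and follows essentially the same route as the paper: Chan's variant of the circle method with Farey dissection at order $N=\lfloor\sqrt{2\pi(n+\Delta_2/24)}\rfloor$, the eta-transformation producing the principal part $\omega_{h,k}\,\Delta_4(k)\,z^{\Delta_1}e^{\frac{\pi}{12k}(\Delta_3(k)/z+\Delta_2 z)}$, a Hankel-contour evaluation yielding $I_{-\Delta_1-1}$, and the identification of \eqref{eq:assump} as precisely the condition that every correction term beyond the leading $1$ carries a non-positive exponent. The paper packages that last point as a monotonicity statement for an auxiliary function $W(x)$ (Lemma \ref{le:bound-1-1}) so as to get a bound uniform over the whole arc and over all higher-order corrections simultaneously, but this is only a tidier version of the mechanism you describe.
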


We have several remarks to make.

\begin{enumerate}[\noindent 1.]
\item The main term of $g(n)$ may vanish for certain families of eta-quotients. One example arises from Gauss' square exponent theorem \cite[Corollary 2.10]{And1976}:
\begin{equation}\label{eq:ga}
\frac{\f{1}^2}{\f{2}}=\sum_{k=-\infty}^\infty (-1)^k q^{k^2},
\end{equation}
from which we see that the Fourier coefficients of the left-hand side are in $\{0,1,\pm 2\}$. In this case, we have $L=2$, and hence we can compute
$$\Delta_3(1)=-\frac{3}{2}\qquad \text{and} \qquad \Delta_3(2)=0.$$
This implies that the set $\cl_{>0}$ is empty. Consequently, the main term vanishes for all $n$.

\item Recalling that the asymptotic expansion of $I_s(x)$ (cf.~\cite[p.~377, (9.7.1)]{AA1972}) states that, for fixed $s$, when $|\arg x|<\frac{\pi}{2}$,
\begin{align}\label{Bessel-order}
I_{s}(x)\sim \frac{e^x}{\sqrt{2\pi x}}\left(1-\frac{4s^2-1}{8x}+\frac{(4s^2-1)(4s^2-9)}{2!(8x)^2}-\cdots \right),
\end{align}
one may say more about the main term. Let $\ell\in\cl_{>0}$ be fixed. One thing we observe is that for given $k$, the sum
\begin{equation*}
\sum_{\substack{0\le h< k\\ (h,k)=1}} \omega_{h,k}\; \ee{-\frac{nh}{k}}
\end{equation*}
has period $k$ for $n\in\mathbb{Z}_{>0}$. On the other hand, letting us fix some $k_0\equiv \ell \pmod{L}$, the following estimate of the tail gives useful information:
\begin{align}
\mathrm{Tail}:=&\sum_{\substack{k_0< k\le N\\k\equiv_L \ell}}\frac{I_{-\Delta_1-1}\left(\frac{\pi }{6k}\sqrt{\Delta_3(\ell)(24n+\Delta_2)}\right)}{k} \sum_{\substack{0\le h< k\\ (h,k)=1}} \omega_{h,k}\; \ee{-\frac{nh}{k}} \nonumber\\
\ll&\sum_{\substack{k_0< k\le N\\k\equiv_L \ell}}I_{-\Delta_1-1}\left(\frac{\pi }{6k}\sqrt{\Delta_3(\ell)(24n+\Delta_2)}\right) \nonumber\\
\ll&\sum_{\substack{k_0< k\le N\\k\equiv_L \ell}} \frac{e^{\frac{\pi }{6k}\sqrt{\Delta_3(\ell)(24n+\Delta_2)}}}{k^{-\frac{1}{2}}(24n+\Delta_2)^{\frac{1}{4}}} \nonumber\\
\ll&\;N^{\frac{3}{2}}(24n+\Delta_2)^{-\frac{1}{4}}e^{\frac{\pi }{6(k_0+L)}\sqrt{\Delta_3(\ell)(24n+\Delta_2)}} \nonumber\\
\ll&\;(24n+\Delta_2)^{\frac{1}{2}}e^{\frac{\pi }{6(k_0+L)}\sqrt{\Delta_3(\ell)(24n+\Delta_2)}} \nonumber\\
=&\;o\left(e^{\frac{\pi }{6k_0}\sqrt{\Delta_3(\ell)(24n+\Delta_2)}}\right).\label{eq:tail-est}
\end{align}

\item An explicit bound of $E(n)$ is given in \eqref{eq:exp-E-bound}. It would be necessary if one wants to study the exact sign pattern of $g(n)$.

\item It is helpful to record Sussman's result \cite{Sus2017}:
\begin{theorem*}[Sussman]
If $\Delta_1> 0$ and the inequality
\begin{equation*}
\min_{1\le r\le R}\left(\frac{\gcd^2(m_r,\ell)}{m_r}\right)\ge \frac{\Delta_3(\ell)}{24}
\end{equation*}
holds for all $1\le \ell \le L$, then for positive integers $n>-\Delta_2/24$, we have
\begin{align}
g(n)&=\sum_{\ell\in \cl_{>0}} 2\pi\;\Delta_4(\ell) \left(\frac{24n+\Delta_2}{\Delta_3(\ell)}\right)^{-\frac{\Delta_1+1}{2}}\nonumber\\
&\quad\quad\quad\quad\ \  \times\sum_{\substack{k\ge 1\\k\equiv_L \ell}}\frac{I_{\Delta_1+1}\left(\frac{\pi }{6k}\sqrt{\Delta_3(\ell)(24n+\Delta_2)}\right)}{k} \sum_{\substack{0\le h< k\\ (h,k)=1}} \omega_{h,k}\; \ee{-\frac{nh}{k}}.
\end{align}
\end{theorem*}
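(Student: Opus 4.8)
The plan is to establish this exact Rademacher-type series by the classical Hardy--Ramanujan--Rademacher circle method, which applies in its cleanest form precisely because the hypothesis $\Delta_1>0$, equivalently $\sum_{r=1}^R\delta_r<0$, forces $G(q)$ to blow up as $q$ tends to every root of unity. First I would recover the coefficients by Cauchy's integral formula,
\[ g(n)=\frac{1}{2\pi i}\oint_{|q|=\rho}\frac{G(q)}{q^{n+1}}\,dq,\qquad 0<\rho<1, \]
and then dissect the circle along the Farey fractions of order $N$: each reduced fraction $h/k$ with $0\le h<k\le N$ contributes a Ford-circle arc, on which I pass to the upper half-plane variable $\tau=\frac{h+iz}{k}$, so that $q=e^{2\pi i\tau}$ sits near the root of unity $e^{2\pi i h/k}$ and $G(q)$ is analyzed through the modular behavior of the constituent values $\eta(m_r\tau)$, using $(q^{m_r};q^{m_r})_\infty=q^{-m_r/24}\eta(m_r\tau)$.

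The heart of the argument is the transformation of each eta value as $\tau\to h/k$. Writing $g_r=\gcd(m_r,k)$, the fraction $m_r h/k$ reduces to $\frac{(m_r/g_r)h}{k/g_r}$ in lowest terms, so $\eta(m_r\tau)$ is governed by the eta transformation with effective denominator $k/\gcd(m_r,k)$ and effective local variable $(m_r/\gcd(m_r,k))\,z$. Applying the Dedekind functional equation,
\[ \eta\!\left(\frac{h+iz}{k}\right)=\varepsilon_{h,k}\,z^{-1/2}\,\eta\!\left(\frac{h'+i/z}{k}\right),\qquad hh'\equiv-1\pmod{k}, \]
whose multiplier involves $e^{\pi i s(h,k)}$, to each factor produces three effects that assemble into the constants of the theorem. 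The product of the Dedekind-sum phases collapses to $\omega_{h,k}$; the product of the $z^{-1/2}$ factors, carrying the scalings $(m_r/\gcd(m_r,k))^{-1/2}$, produces the amplitude $\Delta_4(k)$ together with the power $z^{\Delta_1}$; and combining the singular parts of $\log\eta$ at $i\infty$ with the prefactors $q^{-m_r\delta_r/24}$ and the factor $q^{-n}$ yields the dominant exponential $\exp\!\big(\frac{\pi\,\Delta_3(k)}{12\,k\,z}+\frac{\pi(24n+\Delta_2)}{12\,k}\,z\big)$, so that $\Delta_3(k)$ governs the $1/z$ growth and $24n+\Delta_2$ the $z$-linear term.

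Next I would extract the main term. On each arc, replacing the transformed eta-quotient near $i\infty$ by its leading constant $1$ leaves a Rademacher path integral of the shape $\frac{1}{2\pi i}\int z^{-s}\exp\!\big(A z+B/z\big)\,dz$ over a truncated Hankel-type contour, with $A=\frac{\pi(24n+\Delta_2)}{12k}$ and $B=\frac{\pi\Delta_3(k)}{12k}$; the standard evaluation of such an integral produces exactly the modified Bessel function $I_{\Delta_1+1}$ of argument $2\sqrt{AB}=\frac{\pi}{6k}\sqrt{\Delta_3(k)(24n+\Delta_2)}$. Since $\gcd(m_r,k)$ depends only on $k\bmod L$ with $L=\lcm(m_1,\ldots,m_R)$, both $\Delta_3(k)$ and $\Delta_4(k)$ are periodic modulo $L$; grouping the arcs by the residue $\ell\equiv k\pmod L$ and collecting the $h$-phases into the inner sum $\sum_{\substack{0\le h<k\\(h,k)=1}}\omega_{h,k}\,\ee{-\frac{nh}{k}}$ reproduces the displayed summand. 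Only residues with $\Delta_3(\ell)>0$, i.e.\ $\ell\in\cl_{>0}$, carry a genuine singularity $B>0$, which is why the outer sum is restricted to $\cl_{>0}$. Crucially, $\Delta_1>0$ makes the Bessel order $\Delta_1+1>1$, so that $I_{\Delta_1+1}$ of the shrinking argument decays like $k^{-(\Delta_1+1)}$ and the series over $k$ converges absolutely, allowing the passage $N\to\infty$.

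Finally comes the error analysis, and this is where I expect the \emph{main obstacle} to lie. I would split the error into the contribution of the non-constant Fourier coefficients of the transformed eta-quotient at $i\infty$, and the contribution of arcs whose residue $\ell$ lies in $\cl_{\le 0}$. Each subleading cusp coefficient multiplies the main exponential by a factor of rate $-\frac{2\pi}{k}\frac{\gcd^2(m_r,\ell)}{m_r}\frac1z$ for some $r$; comparing with the main rate $\frac{2\pi}{k}\frac{\Delta_3(\ell)}{24}\frac1z$ shows that the hypothesis \eqref{eq:assump}, namely $\min_{1\le r\le R}\gcd^2(m_r,\ell)/m_r\ge\Delta_3(\ell)/24$, is precisely what prevents any such corrected term from growing faster than, or as fast as, the principal term, so that these corrections stay subdominant; the arcs with $\ell\in\cl_{\le 0}$ likewise contribute only non-singular, exponentially subordinate pieces. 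The delicate point is to bound all of this \emph{uniformly} over $1\le k\le N$ and to show the aggregate is $o(1)$ as $N\to\infty$, so that the truncation disappears and the identity becomes exact; managing these uniform estimates against the single main exponential, which is exactly the function of condition \eqref{eq:assump}, is the crux of the argument.
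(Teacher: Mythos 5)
First, a point of orientation: the paper does not prove this statement itself --- it is Sussman's theorem, recorded with a citation to \arxiv{1710.03415}, and the paper's own Theorem \ref{th:main} treats the complementary case $\Delta_1\le 0$ by Chan's variant of the circle method (truncated sum over $k\le N$, explicit error $E(n)$, and Bessel order $-\Delta_1-1$ via the Hankel-contour Lemma \ref{le:key-int}). Your outline correctly follows the other route, the one the paper attributes to Sussman: Farey/Ford dissection, the eta transformation assembling $\omega_{h,k}$, $\Delta_4(k)$, $z^{\Delta_1}$ and the exponential in $\Delta_3(k)$ and $24n+\Delta_2$, periodicity of $\Delta_3,\Delta_4$ modulo $L$, restriction of the outer sum to $\cl_{>0}$, and absolute convergence of the $k$-series from $\Delta_1>0$. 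That architecture is sound, and your accounting of the transformation data matches the paper's Lemma giving \eqref{eq:G-m-gamma}.

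There is, however, a genuine gap at the crux, and it is exactly the point the paper flags when it says ``one major difference arises from the contours chosen.'' You claim that the main-term integral $\frac{1}{2\pi i}\int z^{\Delta_1}e^{Az+B/z}\,dz$ over a \emph{Hankel-type} contour evaluates to $I_{\Delta_1+1}\left(2\sqrt{AB}\right)$. It does not: on a Hankel contour the Sonine representation $I_s(x)=\frac{1}{2\pi i}\int_{\Gamma}t^{-s-1}e^{\frac{x}{2}\left(t+\frac{1}{t}\right)}dt$ forces $-s-1=\Delta_1$, i.e.\ it produces $I_{-\Delta_1-1}$, which is precisely what the paper's Lemma \ref{le:key-int} yields (with $c=\Delta_1$) in its own proof. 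Since the $\delta_r$ are integers, $\Delta_1$ can be a half-integer (already for $p(n)$ one has $\Delta_1=\tfrac12$), and then $I_{-\nu}$ and $I_{\nu}$ are genuinely different functions, differing by $\frac{2\sin(\pi\nu)}{\pi}K_{\nu}$. Worse, since $I_{-\Delta_1-1}(x)\asymp x^{-\Delta_1-1}$ as $x\to 0^+$ while the Bessel argument is $\asymp\sqrt{n}/k$, the $k$-th summand of the would-be infinite series with order $-\Delta_1-1$ grows polynomially in $k$, so the series diverges and your final step --- letting $N\to\infty$ to turn the truncated formula into the exact identity --- is unreachable on that contour. To obtain the convergent positive order $I_{\Delta_1+1}$ you must use Rademacher's contour: the Ford-circle arc completed to the full circle through the origin, which under $t=1/z$ becomes a vertical line; the Jacobian $dz=-dt/t^2$ converts $z^{\Delta_1}\,dz$ into $-t^{-\Delta_1-2}\,dt$, and the loop then gives order $\Delta_1+1$ with the same argument $2\sqrt{AB}$. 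Since you invoked Ford circles at the outset but then asserted a Hankel-contour evaluation, the decisive step is the one you have wrong (or at least mislabeled), and the error analysis must be redone against the correct contour. A minor further slip: $\Delta_1>0$ does \emph{not} force $G(q)$ to blow up at every root of unity --- growth at $e^{2\pi ih/k}$ is governed by the sign of $\Delta_3(k)$, which can vanish or be negative even when $\Delta_1>0$ (e.g.\ $\bfm=(1,2)$, $\bfd=(-2,1)$ has $\Delta_1=\tfrac12$ but $\Delta_3(2)=0$); your later restriction to $\cl_{>0}$ silently corrects this, but the opening claim should be struck.
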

\noindent One may notice that Sussman's asymptotic formula is almost the same as our main term. His proof is mainly based on Rademacher's original version \cite{Rad1937} of circle method, whereas ours relies on O-Y.~Chan's version \cite{Cha2005}. One major difference arises from the contours chosen for Cauchy's integral formula. 
\end{enumerate}

\subsection{Notation}

Let $\mathbb{Z}$, $\mathbb{R}$ and $\mathbb{C}$ be the set of integers, real numbers and complex numbers, respectively. Let $\mathbb{H}$ be the upper half complex plane. For a given set $\mathcal{S}$, we use $\mathcal{S}_{\text{condition}}$ to denote the subset of $\mathcal{S}$ with the given condition (e.g.~$\mathbb{Z}_{>0}$ denotes the set of positive integers).

The big-$O$ notation is defined in the usual way: $f(x)=O(g(x))$ means that $|f(x)|\le C g(x)$ where $C$ is an absolute constant. If the constant $C$ depends on some variables, then we write $f(x)=O_{\text{variables}}(g(x))$. We may also omit the subscript if these variables are explicit. Furthermore, $f(x)\ll g(x)$ means that $f(x)=O(g(x))$. On the other hand, if $\lim f(x)/g(x)=0$, then we say $f(x)=o(g(x))$. If $\lim f(x)/g(x)=1$, then we write $f(x)\sim g(x)$.

We use $u\equiv_m v $ to denote $u\equiv v \pmod{m}$, and likewise $u\not\equiv_m v $ to denote $u\not\equiv v \pmod{m}$.

For $\Delta\in \frac{1}{2}\mathbb{Z}_{\le 0}$ and $x\in\mathbb{R}_{\ge 1}$, we define
\begin{align}\label{eq:xi-rep}
\Xi_{\Delta}(x):=\begin{cases}
1 & \text{if $\Delta=0$},\\
2x^{1/2} & \text{if $\Delta=-\frac{1}{2}$},\\
x(\log x+1) & \text{if $\Delta=-1$},\\
\zeta(-\Delta)x^{-2\Delta-1} & \text{otherwise},\\
\end{cases}
\end{align}
where $\zeta(\cdot)$ is the Riemann zeta-function.

At last, we use the conventional notation $e(z):=e^{2 \pi i z}$ for complex $z$.

\section{The circle method}

Our main ingredient is O-Y.~Chan's version of circle method \cite{Cha2005} with some slight modifications. For the sake of completeness, we shall reproduce the necessary details.

\subsection{Cauchy's integral formula}

Given a holomorphic function
$$G(q)=\sum_{n\ge 0} g(n) q^n$$
on a simply connected domain containing the origin, we know from Cauchy's integral formula that
\begin{equation*}
g(n)=\frac{1}{2 \pi i} \oint_{|q|=r} \frac{G(q)}{q^{n+1}}\ dq,
\end{equation*}
where the contour integral is taken counter-clockwise.

We take $r=e^{-2 \pi \varrho}$ with $\varrho=1/N^2$ where $N$ is a positive integer to be determined in the sequel.

Let $h/k$ be a Farey fraction of order $N$. We use $\xi_{h,k}$ to denote the interval $[-\theta'_{h,k},\theta''_{h,k}]$ with $-\theta'_{h,k}$ and $\theta''_{h,k}$ being the positive distances from $h/k$ to its neighboring mediants. Now we dissect the circle $|q|=r$ by Farey arcs and obtain
\begin{equation*}
g(n)=\sum_{1\le k\le N} \sum_{\substack{0\le h< k\\ (h,k)=1}} \ee{-\frac{nh}{k}} \int_{\xi_{h,k}} G\big(\ee{h/k+i \varrho +\phi}\big)\ee{-n \phi} e^{2 \pi n \varrho}\ d\phi.
\end{equation*}

Letting $z=k(\varrho -i \phi)$ and $\tau = (h+i z)/k$, we may rewrite the integral as
\begin{equation}\label{eq:cauchy-var}
g(n)=\sum_{1\le k\le N} \sum_{\substack{0\le h< k\\ (h,k)=1}} \ee{-\frac{nh}{k}} \int_{\xi_{h,k}} G\big(\ee{\tau}\big)\ee{-n \phi} e^{2 \pi n \varrho}\ d\phi.
\end{equation}

\subsection{Transformation formulas}

Let
$$F(q)=F\big(\ee{\tau}\big):=\frac{1}{\f{1}}$$
with $\tau\in\mathbb{H}$. We also define the fractional linear transformation
$$\gamma(\tau):=\frac{a\tau +b}{c\tau + d}, \quad \text{with }\gamma=\begin{pmatrix}a & b\\c& d\end{pmatrix}\in SL_2(\mathbb{Z}).$$

We know from the transformation formula of Dedekind eta-function \cite[pp.~52--61]{Apo1990} that
\begin{equation}\label{eq:F-gamma}
F\big(\ee{\tau}\big)=\ee{\frac{\tau-\gamma(\tau)}{24}-\frac{s(d,c)}{2}+\frac{a+d}{24c}} \sqrt{-i (c\tau +d)}\ F\big(\ee{\gamma(\tau)}\big),
\end{equation}
where the square root is taken on the principal branch, with $z^{1/2}>0$ for $z>0$.

Given a positive integer $m$, to transform $F(e(m\tau))$ properly for each choice of $k$ and $h$ (with $\gcd(h,k)=1$), we proceed as follows.

Let $d=\gcd(m,k)$. We also write $m=d m'$ and $k= d k'$. Choosing an integer $\tilde{h}_{m'}$ such that $\tilde{h}_{m'} m' h\equiv -1 \pmod{k'}$ (this is possible since $\gcd(h,k)=1$) and putting $b_{m'}=(\tilde{h}_{m'} m' h+1)/k'$, we obtain the following matrix in $SL_2(\mathbb{Z})$:
\begin{equation}
\gamma_{(m,k)}=\begin{pmatrix}\tilde{h}_{m'} & -b_{m'}\\k' & -m' h\end{pmatrix}.
\end{equation}
Note that our $\gamma_{(m,k)}$ is different to Chan's definition. However, they are identical if $\gcd(m,k)=1$.

Recall that
$$\tau=\frac{h+iz}{k}=\frac{h+iz}{d k'}.$$
We have
\begin{align*}
\gamma_{(m,k)}(m\tau)&=\dfrac{\tilde{h}_{m'}\cdot m\frac{h+iz}{d k'}-b_{m'}}{k' \cdot m\frac{h+iz}{d k'}-m' h}=\frac{\tilde{h}_{m'} m' h+\tilde{h}_{m'}(im'z)-(\tilde{h}_{m'}m' h+1)}{m' h k'+k'(im'z)-m' hk'}\\
&=\frac{\tilde{h}_{m'}}{k'}+\frac{1}{m'k'z}i.
\end{align*}
This implies that
\begin{equation}\label{eq:gamma}
\gamma_{(m,k)}(m\tau)=\frac{\tilde{h}_{m'} \gcd(m,k)}{k}+\frac{\gcd^2(m,k)z^{-1}}{mk}i.
\end{equation}

On the other hand, since $\gamma_{(m,k)}\in SL_2(\mathbb{Z})$, it follows from \eqref{eq:F-gamma} that
\begin{align}
F\big(\ee{m\tau}\big)&=e^{\frac{\pi}{12 k}\left(\frac{\gcd^2(m,k)}{mz}-mz\right)} \ee{\frac{s\left(\frac{m h}{\gcd(m,k)},\frac{k}{\gcd(m, k)}\right)}{2}}\nonumber\\
&\qquad\times \sqrt{\frac{m z}{\gcd(m,k)}}\ F\big(\ee{\gamma_{(m,k)}(m\tau)}\big). \label{eq:F-m-gamma}
\end{align}

Let $G(q)$ be defined by \eqref{eq:eta-quo}. Then
\begin{equation}
G\left(\ee{\tau}\right)=\prod_{r=1}^R F\left(\ee{m_r \tau}\right)^{-\delta_{r}}.
\end{equation}
The following result is an immediate consequence of \eqref{eq:F-m-gamma}.

\begin{lemma}
We have, for any positive integers $k$ and $h$ with $\gcd(h,k)=1$,
\begin{align}\label{eq:G-m-gamma}
G\big(\ee{\tau}\big)&=e^{\frac{\pi}{12 k}\left(\frac{\Delta_3(k)}{z}+\Delta_2 z\right)}\; z^{\Delta_1}\; \omega_{h,k}\; \Delta_4(k)\; \prod_{r=1}^R F\big(\ee{\gamma_{(m_r,k)}(m_r\tau)}\big)^{-\delta_{r}}.
\end{align}
\end{lemma}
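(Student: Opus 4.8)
The plan is to prove \eqref{eq:G-m-gamma} by a direct multiplicative computation. Since $G(\ee{\tau})=\prod_{r=1}^R F(\ee{m_r\tau})^{-\delta_r}$, I would substitute the transformation formula \eqref{eq:F-m-gamma} (applied with $m=m_r$ for each $r$) into this product, raise each factor to the power $-\delta_r$, and then regroup the resulting pieces. There are exactly four types of factors arising from \eqref{eq:F-m-gamma} — the exponential prefactor, the Dedekind-sum root of unity, the square-root term, and the residual $F$-value — and the goal is to show that the first three collapse into $e^{\frac{\pi}{12k}(\Delta_3(k)/z+\Delta_2 z)}$, $\omega_{h,k}$, and $z^{\Delta_1}\Delta_4(k)$, respectively, while the fourth is left untouched.

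First I would handle the exponential factor. Collecting $\prod_r (e^{\frac{\pi}{12k}(\gcd^2(m_r,k)/(m_r z)-m_r z)})^{-\delta_r}$ produces a single exponential with argument $-\frac{\pi}{12k}\sum_r \delta_r(\gcd^2(m_r,k)/(m_r z) - m_r z)$; splitting the two summands, the $1/z$-part equals $\frac{\pi}{12k}\cdot\frac{\Delta_3(k)}{z}$ by the definition $\Delta_3(k)=-\sum_r \delta_r\gcd^2(m_r,k)/m_r$, and the linear-in-$z$ part equals $\frac{\pi}{12k}\Delta_2 z$ by $\Delta_2=\sum_r m_r\delta_r$. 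Next, for the Dedekind-sum factors I would use $\ee{s/2}=e^{\pi i s}$ and raise to $-\delta_r$; the product then becomes $\exp(-\pi i\sum_r \delta_r s(m_r h/\gcd(m_r,k),k/\gcd(m_r,k)))$, which is precisely $\omega_{h,k}$.

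The remaining factor is $\prod_r (\sqrt{m_r z/\gcd(m_r,k)})^{-\delta_r}$, and here I would separate the positive-real constant $m_r/\gcd(m_r,k)$ from $z$. Because $z=k(\varrho-i\phi)$ has positive real part and $m_r/\gcd(m_r,k)>0$, the principal square root factors as $\sqrt{(m_r/\gcd(m_r,k))z}=\sqrt{m_r/\gcd(m_r,k)}\,\sqrt{z}$, so the product splits as $(\prod_r (m_r/\gcd(m_r,k))^{-\delta_r/2})\cdot(\sqrt{z})^{-\sum_r\delta_r}$. The constant part is $\Delta_4(k)$ by definition, and since $\sum_r\delta_r=-2\Delta_1$ is an integer, $(\sqrt{z})^{-\sum_r\delta_r}=(\sqrt{z})^{2\Delta_1}=z^{\Delta_1}$ on the principal branch. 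Multiplying the four collected pieces together and keeping $\prod_r F(\ee{\gamma_{(m_r,k)}(m_r\tau)})^{-\delta_r}$ intact then yields \eqref{eq:G-m-gamma}.

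The computation is routine; the only point demanding care is the branch bookkeeping in the last step, namely justifying that the principal square root distributes over the positive scalar and $z$, and that the case $\sum_r\delta_r$ odd (equivalently $\Delta_1\in\frac{1}{2}+\mathbb{Z}$) is still handled consistently by $(\sqrt{z})^{2\Delta_1}=z^{\Delta_1}$. Since $z$ remains in the open right half-plane throughout the dissection, no branch crossing occurs and this identification is unambiguous, so I expect no genuine obstacle beyond this verification.
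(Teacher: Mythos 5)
Your proposal is correct and is precisely the computation the paper has in mind: the paper states the lemma as an ``immediate consequence'' of \eqref{eq:F-m-gamma} without writing out the details, and your collection of the exponential, Dedekind-sum, and square-root factors into $e^{\frac{\pi}{12k}(\Delta_3(k)/z+\Delta_2 z)}$, $\omega_{h,k}$, and $\Delta_4(k)\,z^{\Delta_1}$ matches the definitions of $\Delta_1$, $\Delta_2$, $\Delta_3(k)$, $\Delta_4(k)$ and $\omega_{h,k}$ exactly. Your attention to the branch of the square root (using $\Re(z)>0$) is a valid and welcome verification of the only nontrivial point.
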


\subsection{Some bounds}

We now present some bounds, most of which are given by Chan.

A standard result on Farey fractions states that (cf.~\cite[Chap.~3]{HW1979})
\begin{equation}\label{eq:theta-bound}
\frac{1}{2kN}\le \theta'_{h,k},\theta''_{h,k}\le \frac{1}{kN}.
\end{equation}
Consequently,
\begin{equation}\label{eq:xi-bound}
\frac{1}{kN}\le |\xi_{h,k}| \le \frac{2}{kN}.
\end{equation}

We next notice that $z=k(\varrho -i \phi)$. Hence
\begin{equation}\label{eq:Re-z-bound}
\Re(z)=k\varrho=\frac{k}{N^2}.
\end{equation}
This implies that
\begin{equation}\label{eq:z-bound}
|z|\ge \frac{k}{N^2}.
\end{equation}

On the other hand, we have
\begin{align}\label{eq:Re-1-z-bound}
\Re\left(\frac{1}{z}\right)\ge \frac{k}{2}.
\end{align}
This is because
\begin{align*}
\Re\left(\frac{1}{z}\right)=\frac{1}{k}\frac{\varrho}{\varrho^2+\phi^2}\ge \frac{1}{k}\frac{N^{-2}}{N^{-4} + k^{-2} N^{-2}}=\frac{k}{k^2N^{-2}+1}\ge \frac{k}{1+1}=\frac{k}{2},
\end{align*}
where we use the fact $k\le N$ in the last inequality.

In the author's recent work with Tang and Wang \cite{CTW2018}, it is shown that

\begin{lemma}
We have
\begin{align}
\left|F\big(\ee{\gamma_{(m,k)}(m\tau)}\big)\right|&\le \exp\left(\frac{e^{-\pi \gcd^2(m,k)/m}}{\left(1-e^{-\pi \gcd^2(m,k)/m}\right)^2}\right),\label{eq:bound-1}\\
\left|\frac{1}{F\big(\ee{\gamma_{(m,k)}(m\tau)}\big)}\right|&\le \exp\left(\frac{e^{-\pi \gcd^2(m,k)/m}}{\left(1-e^{-\pi \gcd^2(m,k)/m}\right)^2}\right).\label{eq:bound-2}
\end{align}
\end{lemma}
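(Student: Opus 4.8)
The plan is to reduce both inequalities to a single estimate on the modulus of $q':=\ee{\gamma_{(m,k)}(m\tau)}$, since once we know $|q'|$ is small the bounds on $|F(q')|=\prod_{j\ge1}(1-(q')^j)^{-1}$ and on $|1/F(q')|=|(q';q')_\infty|$ will follow from elementary manipulations of these infinite products. So the first task is to pin down how small $|q'|$ is.

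First I would compute $|q'|$. Writing $q'=e^{2\pi i\,\gamma_{(m,k)}(m\tau)}$, we have $|q'|=e^{-2\pi\Im(\gamma_{(m,k)}(m\tau))}$. Using the explicit expression \eqref{eq:gamma}, the constant term $\tilde h_{m'}\gcd(m,k)/k$ is real and so contributes nothing, while the term $\frac{\gcd^2(m,k)}{mk}z^{-1}i$ contributes exactly $\frac{\gcd^2(m,k)}{mk}\Re(z^{-1})$ to the imaginary part. Invoking the bound \eqref{eq:Re-1-z-bound}, namely $\Re(1/z)\ge k/2$, gives
\[
\Im\big(\gamma_{(m,k)}(m\tau)\big)\ge \frac{\gcd^2(m,k)}{mk}\cdot\frac{k}{2}=\frac{\gcd^2(m,k)}{2m},
\]
and therefore $|q'|\le e^{-\pi\gcd^2(m,k)/m}=:\rho<1$. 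In particular $q'$ lies in the open unit disk, so $F(q')$ is well defined.

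With $|q'|\le\rho$ in hand, both bounds are routine. For $|F(q')|$ I would estimate through the logarithm: using $|1-(q')^j|\ge 1-|q'|^j$ and $-\log(1-t)=\sum_{n\ge1}t^n/n\le t/(1-t)$,
\[
\log|F(q')|\le -\sum_{j\ge1}\log\big(1-|q'|^j\big)\le \sum_{j\ge1}\frac{|q'|^j}{1-|q'|^j}\le \frac{1}{1-|q'|}\sum_{j\ge1}|q'|^j=\frac{|q'|}{(1-|q'|)^2}.
\]
For the reciprocal, $|1/F(q')|=|(q';q')_\infty|\le\prod_{j\ge1}(1+|q'|^j)\le\exp\!\big(\sum_{j\ge1}|q'|^j\big)=\exp\!\big(|q'|/(1-|q'|)\big)$, and since $0<1-|q'|\le1$ we have $|q'|/(1-|q'|)\le|q'|/(1-|q'|)^2$. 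Thus in both cases the exponent is at most $|q'|/(1-|q'|)^2$.

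Finally, since $t\mapsto t/(1-t)^2$ is increasing on $[0,1)$ (its derivative equals $(1+t)/(1-t)^3>0$) and $|q'|\le\rho$, I would replace $|q'|$ by $\rho$ to obtain the common bound $\exp\!\big(\rho/(1-\rho)^2\big)$, which is precisely the right-hand side of both \eqref{eq:bound-1} and \eqref{eq:bound-2}. The only genuinely delicate point is the passage from the matrix action \eqref{eq:gamma} to the lower bound on $\Im(\gamma_{(m,k)}(m\tau))$; once one correctly identifies that it is $\Re(1/z)$—and not $\Im(1/z)$—that feeds into the imaginary part and applies \eqref{eq:Re-1-z-bound}, everything downstream is standard product-to-series estimation.
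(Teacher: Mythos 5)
Your proof is correct and follows essentially the same route as the paper: the key step in both is the lower bound $\Im\big(\gamma_{(m,k)}(m\tau)\big)\ge \gcd^2(m,k)/(2m)$ obtained from \eqref{eq:gamma} and \eqref{eq:Re-1-z-bound}, after which both arguments reduce the two product estimates to the elementary bound $\log F(x)\le x/(1-x)^2$ for $0\le x<1$. Your series manipulations (using $-\log(1-t)\le t/(1-t)$ and $1+t\le e^t$) differ only cosmetically from the paper's divisor-sum computation, so there is nothing to add.
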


\begin{proof}
We write in this proof $q=\ee{y}$ with $y\in \mathbb{H}$. Recall that $p(n)$ denotes the number of partitions of $n$ with the convention that $p(0)=1$. It is clear that $p(n)>0$ for all $n\ge 0$.

We first have
$$|F(q)|\le \sum_{n\ge 0} p(n) |q|^n = F(|q|).$$

On the other hand,
\begin{equation*}
\left|\frac{1}{F(q)}\right|=\prod_{k\ge 1} \left|1-q^k\right|\le \prod_{k\ge 1} \left(1+|q|^k\right)\le \prod_{k\ge 1} \frac{1}{1-|q|^k}=F(|q|).
\end{equation*}

For real $0\le x< 1$, we have
\begin{align*}
\log F(x)&=-\sum_{k\ge 1}\log(1-x^k)=\sum_{k\ge 1}\sum_{m\ge 1}\frac{x^{km}}{m}=\sum_{n\ge 1} x^n \sum_{d\mid n}\frac{1}{d} \\
& \le \sum_{n\ge 1} n x^n = \frac{x}{(1-x)^2}.
\end{align*}
Noting that $|q|=|e(y)|=e^{-2\pi \Im(y)}$, we have
\begin{equation}\label{eq:Fq-bound}
F(|q|)\le \exp\left(\frac{e^{-2\pi \Im(y)}}{(1-e^{-2\pi \Im(y)})^2}\right).
\end{equation}
We remark that the right-hand side can be treated as a decreasing function of $\Im(y)$.

At last, we see from \eqref{eq:gamma} that
$$\Im\left(\gamma_{(m,k)}(m\tau)\right)=\frac{\gcd^2(m,k)}{mk}\Re\left(\frac{1}{z}\right)\ge \frac{\gcd^2(m,k)}{mk}\frac{k}{2}=\frac{\gcd^2(m,k)}{2m},$$
where we use \eqref{eq:Re-1-z-bound}. Hence the left-hand sides of \eqref{eq:bound-1} and \eqref{eq:bound-2} are both
$$\le \exp\left(\frac{e^{-2\pi \Im\left(\ee{\gamma_{(m,k)}(m\tau)}\right)}}{\left(1-e^{-2\pi \Im\left(\ee{\gamma_{(m,k)}(m\tau)}\right)}\right)^2}\right)\le \exp\left(\frac{e^{-\pi \gcd^2(m,k)/m}}{\left(1-e^{-\pi \gcd^2(m,k)/m}\right)^2}\right).$$
\end{proof}

We also extend a bound given in \cite[Lemma 3.3]{CTW2018}.

\begin{lemma}\label{le:rough-bound}
Let $\eta_1,\ldots,\eta_R$ be $R$ integers, all non-zero, and let $y_1,\ldots,y_R$ be $R$ complex numbers in $\mathbb{H}$. Then
\begin{equation}
\left|\prod_{r=1}^R F\left(\ee{y_r}\right)^{\eta_r}-1\right|\le \exp \left(\sum_{r=1}^R \frac{|\eta_r|e^{-2\pi \Im(y_r)}}{(1-e^{-2\pi \Im(y_r)})^2}\right)-1.
\end{equation}
\end{lemma}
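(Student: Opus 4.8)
The plan is to pass to logarithms, which turns the product into a sum and reduces everything to the single real estimate $\log F(x)\le x/(1-x)^2$ for $0\le x<1$ already obtained in the proof of the preceding lemma. Writing $q_r=\ee{y_r}$, so that $|q_r|=e^{-2\pi\Im(y_r)}<1$, each $F(q_r)$ is nonvanishing and analytic on the unit disk, and the power series
\[
\log F(q)=-\log(q;q)_\infty=\sum_{n\ge 1}\Bigg(\sum_{d\mid n}\frac1d\Bigg)q^n
\]
converges absolutely for $|q|<1$ and defines a holomorphic logarithm of $F(q)$. Setting $S:=\sum_{r=1}^R \eta_r\log F(q_r)$, the identity $e^{n w}=(e^w)^n$ for integer exponents $n$ gives $e^S=\prod_{r=1}^R F(q_r)^{\eta_r}$, so the quantity to be bounded is exactly $|e^S-1|$.

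First I would bound $|S|$. Applying the triangle inequality termwise to the (absolutely convergent) double series gives
\[
|S|\le \sum_{r=1}^R |\eta_r|\sum_{n\ge 1}\Bigg(\sum_{d\mid n}\frac1d\Bigg)|q_r|^n=\sum_{r=1}^R |\eta_r|\,\log F\big(|q_r|\big),
\]
the last equality being the same power series evaluated at the real argument $x=|q_r|=e^{-2\pi\Im(y_r)}\in[0,1)$. Invoking $\log F(x)\le x/(1-x)^2$ then yields
\[
|S|\le \sum_{r=1}^R \frac{|\eta_r|\,e^{-2\pi\Im(y_r)}}{\big(1-e^{-2\pi\Im(y_r)}\big)^2}=:B.
\]

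Finally I would conclude with the elementary inequality $|e^S-1|\le e^{|S|}-1$, which follows by applying the triangle inequality to $e^S-1=\sum_{j\ge 1}S^j/j!$. Since $x\mapsto e^x-1$ is increasing on $[0,\infty)$ and $|S|\le B$, this gives
\[
\left|\prod_{r=1}^R F\left(\ee{y_r}\right)^{\eta_r}-1\right|=|e^S-1|\le e^{|S|}-1\le e^{B}-1,
\]
which is precisely the claimed bound.

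I do not expect a genuine obstacle here: the entire argument is dictated by the shape $\exp(\cdots)-1$ of the target right-hand side, which is exactly what $|e^S-1|\le e^{|S|}-1$ produces. The only points meriting a word of care are the absolute convergence that legitimizes the termwise triangle inequality (immediate from $|q_r|<1$) and the observation that $\sum_r\eta_r\log F(q_r)$ is a bona fide logarithm of $\prod_r F(q_r)^{\eta_r}$; the latter is automatic precisely because the exponents $\eta_r$ are integers, so no branch ambiguity intervenes.
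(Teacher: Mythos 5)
Your proof is correct, but it follows a genuinely different route from the paper's. The paper never passes to logarithms of the complex quantities: it expands each $F(q_r)^{\eta_r}$ as a power series $\sum_n a_r(n)q_r^n$, interprets it combinatorially ($|\eta_r|$-colored partitions for $\eta_r>0$, signed $|\eta_r|$-colored distinct partitions for $\eta_r<0$) to get the coefficient domination $|a_r(n)|\le\tilde a_r(n)$ with $\tilde a_r(n)$ the coefficients of $F(q_r)^{|\eta_r|}$ and $a_r(0)=\tilde a_r(0)=1$, and then bounds the product minus $1$ termwise by $\prod_r F(|q_r|)^{|\eta_r|}-1$, finishing with the same real estimate $\log F(x)\le x/(1-x)^2$. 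Your argument replaces the coefficient-domination step by the single elementary inequality $|e^{S}-1|\le e^{|S|}-1$ applied to $S=\sum_r\eta_r\log F(q_r)$, with $|S|$ bounded by the termwise triangle inequality on the absolutely convergent double series $-\sum_k\log(1-q^k)=\sum_n\sigma_{-1}(n)q^n$; the branch issue you flag is handled correctly, since for integer $\eta_r$ one has $e^{\eta_r\log F(q_r)}=F(q_r)^{\eta_r}$ unambiguously. What each approach buys: yours is shorter, needs no combinatorial interpretation, and would extend verbatim to arbitrary real exponents (given a fixed holomorphic logarithm), while the paper's version stays entirely at the level of $q$-series coefficients, which is consistent with the style of the surrounding estimates and makes the domination $|a_r(n)|\le\tilde a_r(n)$ available for reuse. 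Both yield exactly the stated bound.
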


\begin{proof}
Let $q_r = e(y_r)$ for $r=1,\ldots,R$.

Note that $F(q_r)^{\eta_r}$ can be treated as the generating function of $|\eta_r|$-colored partitions when $\eta_r$ is a positive integer, while when $\eta_r$ is a negative integer, it is the generating function of weighted $|\eta_r|$-colored distinct partitions where the weight is given by $(-1)^\sharp$ with $\sharp$ counting the total number of parts.

Hence, for $r=1,\ldots,R$, if we write
$$F\left(\ee{y_r}\right)^{\eta_r}=\sum_{n\ge 0} a_r(n) q_r^n \quad\text{and}\quad F\left(\ee{y_r}\right)^{|\eta_r|}=\sum_{n\ge 0} \tilde{a}_r(n) q_r^n,$$
then $|a_r(n)|\le \tilde{a}_r(n)$ for $n\ge 0$ with $a_r(0)=\tilde{a}_r(0)=1$.

Hence
\begin{align*}
\left|\prod_{r=1}^R F\left(\ee{y_r}\right)^{\eta_r}-1\right|&=\left|\sum_{n_1\ge 0}\cdots \sum_{n_R\ge 0} a_1(n_1)\cdots a_R(n_R) q_1^{n_1}\cdots q_R^{n_R}-1\right|\\
&=\left|\underset{(n_1,\ldots,n_R)\ne (0,\ldots,0)}{\sum_{n_1\ge 0}\cdots \sum_{n_R\ge 0}} a_1(n_1)\cdots a_R(n_R) q_1^{n_1}\cdots q_R^{n_R}\right|\\
&\le \underset{(n_1,\ldots,n_R)\ne (0,\ldots,0)}{\sum_{n_1\ge 0}\cdots \sum_{n_R\ge 0}} \tilde{a}_1(n_1)\cdots \tilde{a}_R(n_R) |q_1|^{n_1}\cdots |q_R|^{n_R}\\
&=\sum_{n_1\ge 0}\cdots \sum_{n_R\ge 0} \tilde{a}_1(n_1)\cdots \tilde{a}_R(n_R) |q_1|^{n_1}\cdots |q_R|^{n_R} -1\\
&= \prod_{r=1}^R F(|q_r|)^{|\eta_r|}-1\le  \exp \left(\sum_{r=1}^R \frac{|\eta_r|e^{-2\pi \Im(y_r)}}{(1-e^{-2\pi \Im(y_r)})^2}\right)-1,
\end{align*}
where we use \eqref{eq:Fq-bound} in the last inequality.
\end{proof}

\subsection{An integral}

The last task in this section is to evaluate a useful integral.

\begin{lemma}\label{le:key-int}
Let $a\in \mathbb{R}_{>0}$, $b\in\mathbb{R}$ and $c\in\frac{1}{2}\mathbb{Z}_{\le 0}$. Let $\gcd(h,k)=1$. Define
\begin{equation}
I:=\int_{\xi_{h,k}}e^{\frac{\pi}{12k}\left(\frac{a}{z}+bz\right)}z^{c}\ee{-n\phi}e^{2\pi n \varrho}\ d\phi.
\end{equation}
Then, for those positive integers $n$ with $24n+b> 0$, we have
\begin{equation}
I=\frac{2\pi}{k} \left(\frac{24n+b}{a}\right)^{-\frac{c+1}{2}} I_{-c-1}\left(\frac{\pi }{6k}\sqrt{a(24n+b)}\right)+E(I)
\end{equation}
where
\begin{equation}
|E(I)|\le\frac{2^{-c}\pi^{-1}e^{\frac{a\pi}{3}}N^{-c}}{n+\frac{b}{24}}e^{2\pi\varrho \left(n+\frac{b}{24}\right)}.
\end{equation}
\end{lemma}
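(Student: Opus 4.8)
The plan is to turn $I$ into a single contour integral in a new variable, read off the main term from the Schläfli--Hankel integral representation of the Bessel function, and control the remainder by deforming the two tails into the left half-plane rather than along a vertical line. First I would apply the substitution $z=k(\varrho-i\phi)$ from Subsection~2.1, under which $d\phi=\frac{i}{k}\,dz$ and $\ee{-n\phi}e^{2\pi n\varrho}=e^{2\pi nz/k}$. Folding the last factor into the exponential merges the two terms linear in $z$, giving
\begin{equation*}
I=\frac{i}{k}\int_{\Gamma}z^{c}\exp\left(\frac{\pi}{12k}\left(\frac{a}{z}+(24n+b)z\right)\right)dz,
\end{equation*}
where $\Gamma$ is the vertical segment $\Re(z)=k\varrho=k/N^{2}$ traversed downward between its endpoints $z=\gamma+ik\theta'_{h,k}$ and $z=\gamma-ik\theta''_{h,k}$. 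Setting $A=\frac{\pi a}{12k}$, $B=\frac{\pi(24n+b)}{12k}$ and $\beta=24n+b>0$, the integrand is $z^{c}e^{A/z+Bz}$, and the numerology driving everything is $2\sqrt{AB}=\frac{\pi}{6k}\sqrt{a\beta}$ together with $A/B=a/\beta$.

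For the main term I would use that the positive scaling $t=z\sqrt{B/A}$ sends the integrand to $t^{c}e^{\frac{x}{2}(t+1/t)}$ with $x=2\sqrt{AB}$, so that the Schläfli representation $\frac{1}{2\pi i}\int_{\mathcal H}t^{-\nu-1}e^{\frac{x}{2}(t+1/t)}\,dt=I_{\nu}(x)$ with $\nu=-c-1$ yields
\begin{equation*}
\int_{\mathcal H}z^{c}e^{A/z+Bz}\,dz=2\pi i\left(\frac{\beta}{a}\right)^{-\frac{c+1}{2}}I_{-c-1}\left(\frac{\pi}{6k}\sqrt{a\beta}\right),
\end{equation*}
where $\mathcal H$ is a Hankel contour around the cut $(-\infty,0]$. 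This integral converges absolutely for every $c\in\frac12\mathbb Z_{\le0}$, since $e^{Bz}\to0$ along the arms and $e^{A/z}\to0$ near the origin on the negative side; multiplying by $i/k$, and fixing the orientation of $\mathcal H$ so the sign comes out right, reproduces the stated main term.

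Then I would realize $E(I)=I-M$ (with $M$ the main term) as a deformation error. In the slit plane $\mathbb C\setminus(-\infty,0]$ the integrand is holomorphic, and I may take the arms of $\mathcal H$ to be the horizontal rays $R_{\pm}$ issuing leftward from the endpoints of $\Gamma$, at heights $+k\theta'_{h,k}$ and $-k\theta''_{h,k}$. Cauchy's theorem applied to the closed contour $\Gamma\cup R_{+}\cup\mathcal H\cup R_{-}$ then gives $E(I)=\frac{i}{k}\big(\int_{R_+}+\int_{R_-}\big)$. Routing the tails to $\Re(z)=-\infty$ instead of to $\pm i\infty$ is the decisive step: it restores \emph{absolute} convergence and so handles the borderline exponents $c\in\{0,-\frac12,-1\}$, which converge only conditionally along a vertical line, on the same footing as the rest. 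On $R_+$, writing $z=(\gamma-s)+ik\theta'_{h,k}$ with $s\ge0$, I use $|e^{Bz}|=e^{B\gamma}e^{-Bs}$, the bound $|z^{c}|\le(k\theta'_{h,k})^{c}\le(2N)^{-c}$ from \eqref{eq:theta-bound} and $c\le0$, and a short calculation with \eqref{eq:theta-bound} and $\gamma=k/N^{2}$ giving $\Re(A/z)\le\frac{\pi a}{3}$ (the ray stays far from the origin because $\gamma$ is so small). Integrating $e^{-Bs}$ contributes $1/B=\frac{12k}{\pi\beta}$, and the two rays together yield
\begin{equation*}
\abs{E(I)}\le\frac{1}{k}\cdot2\cdot(2N)^{-c}e^{\pi a/3}e^{B\gamma}\cdot\frac{12k}{\pi\beta}=\frac{2^{-c}N^{-c}}{\pi\left(n+\frac{b}{24}\right)}e^{\pi a/3}e^{2\pi\varrho\left(n+\frac{b}{24}\right)},
\end{equation*}
upon using $\beta=24(n+\frac{b}{24})$ and $B\gamma=2\pi\varrho(n+\frac{b}{24})$; the $1/k$ from the substitution cancels the $k$ hidden in $1/B$, which is precisely why the final bound carries no $k$.

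The main obstacle is exactly this tail estimate in the borderline range $c\in\{0,-\frac12,-1\}$: a vertical tail there is only conditionally convergent, and a naive absolute bound $\int y^{c}\,dy$ either diverges or loses a full power of $N$. Deforming into the left half-plane is what converts the oscillation of $e^{Bz}$ into honest exponential decay and produces both the sharp power $N^{-c}$ and the exact constant. The remaining work I expect to be routine but must be done carefully: justifying the contour deformation (vanishing of the arc at infinity and of the small loop about the branch point), and tracking orientations so that the Schläfli representation delivers the main term with a plus sign.
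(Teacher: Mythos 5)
Your proposal is correct and follows essentially the same route as the paper: the same substitution to a vertical segment, the same completion to a Hankel contour yielding $I_{-c-1}$ via the Schläfli representation, and the same tail bounds $|z^c|\le(2N)^{-c}$, $\Re(a\pi/(12k^2w))\le a\pi/3$, and $\int e^{-Bs}\,ds=1/B$ producing the stated constant. The only cosmetic difference is that the paper first rescales to $w=z/k$ before splitting off the two horizontal rays.
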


\begin{proof}
We first put $w=z/k=\varrho-i\phi$ to obtain
$$I=\frac{1}{2\pi i}\int_{\varrho-i\theta''_{h,k}}^{\varrho+i\theta'_{h,k}} 2\pi e^{\frac{a\pi}{12k^2 w}} e^{2\pi w \left(n+\frac{b}{24}\right)}(kw)^{c}\ dw.$$
One may separate the integral into three parts
\begin{align*}
I&=\frac{1}{2\pi i}\left(\int_\Gamma-\int_{-\infty-i\theta''_{h,k}}^{\varrho-i\theta''_{h,k}}+\int_{-\infty+i\theta'_{h,k}}^{\varrho+i\theta'_{h,k}}\right) 2\pi e^{\frac{a\pi}{12k^2 w}} e^{2\pi w \left(n+\frac{b}{24}\right)}(kw)^{c}\ dw\\
&=:J_1-J_2+J_3,
\end{align*}
where
\begin{align*}
\Gamma:=(-\infty-i\theta''_{h,k}) \to (\varrho-i\theta''_{h,k}) \to (\varrho+i\theta'_{h,k}) \to (-\infty+i\theta'_{h,k})
\end{align*}
is a Hankel contour.

To compute the main term $J_1$, we make the following change of variables $t=wk\sqrt{(24n+b)/a}$ to obtain
$$J_1=\frac{2\pi}{k}\left(\frac{24n+b}{a}\right)^{-\frac{c+1}{2}} \frac{1}{2\pi i}\int_{\tilde{\Gamma}} e^{\frac{\pi }{12k}\sqrt{a(24n+b)}\left(t+\frac{1}{t}\right)} t^{c}\  dt.$$
Note that the new contour $\tilde{\Gamma}$ is still a Hankel contour. Recalling the contour integral representation of $I_s(x)$:
$$I_s(x)=\frac{1}{2\pi i}\int_{\Gamma} t^{-s-1}e^{\frac{x}{2}\left(t+\frac{1}{t}\right)}\ dt\quad\text{($\Gamma$ is a Hankel contour)},$$
we conclude
$$J_1=\frac{2\pi}{k} \left(\frac{24n+b}{a}\right)^{-\frac{c+1}{2}} I_{-c-1}\left(\frac{\pi }{6k}\sqrt{a(24n+b)}\right).$$

For the error term $E(I)$, which comes from $J_2$ and $J_3$, we put $w=x+i\theta$ with $-\infty\le x\le \varrho$ and $\theta\in\{\theta'_{h,k},-\theta''_{h,k}\}$. Following Chan \cite[p.~120]{Cha2005}, we have
$$\left|e^{2\pi w \left(n+\frac{b}{24}\right)}\right|= e^{2\pi x \left(n+\frac{b}{24}\right)},\\$$
and
\begin{align*}
\left|e^{\frac{a\pi}{12k^2 w}}\right|&=e^{\frac{a\pi}{12k^2}\Re\left(\frac{1}{w}\right)}=e^{\frac{a\pi}{12k^2}\frac{x}{x^2+\theta^2}}\le e^{\frac{a\pi}{12k^2}\frac{x}{\theta^2}}\le e^{\frac{a\pi}{12k^2}\varrho (2kN)^2}=e^{\frac{a\pi}{3}},\\
\left|(kw)^c\right|&=\left(|kw|^{-1}\right)^{-c}\le \left(\frac{1}{k\sqrt{x^2+\theta^2}}\right)^{-c}\le \left(\frac{1}{k|\theta|}\right)^{-c}\le (2N)^{-c},
\end{align*}
where we use the bound $\frac{1}{2kN}\le |\theta|\le \frac{1}{kN}$. Hence for $j=2$ and $3$, we have
\begin{align*}
|J_j|&\le \frac{1}{2\pi} \int_{-\infty}^{\varrho} 2\pi e^{\frac{a\pi}{3}}e^{2\pi x \left(n+\frac{b}{24}\right)}\ (2N)^{-c}\ dx\\
&=\frac{2^{-c-1}\pi^{-1}e^{\frac{a\pi}{3}}N^{-c}}{n+\frac{b}{24}}e^{2\pi\varrho \left(n+\frac{b}{24}\right)}.
\end{align*}
This implies that
$$|E(I)|=|-J_2+J_3|\le |J_2|+|J_3|\le \frac{2^{-c}\pi^{-1}e^{\frac{a\pi}{3}}N^{-c}}{n+\frac{b}{24}}e^{2\pi\varrho \left(n+\frac{b}{24}\right)}.$$
\end{proof}

\section{Asymptotics}

We rewrite \eqref{eq:cauchy-var} as follows:

\begin{align}
g(n)&=\sum_{1\le k\le N} \sum_{\substack{0\le h< k\\ (h,k)=1}} \ee{-\frac{nh}{k}} \int_{\xi_{h,k}} G\big(\ee{\tau}\big)\ee{-n \phi} e^{2 \pi n \varrho}\ d\phi \nonumber\\
&=\sum_{\ell=1}^L\sum_{\substack{1\le k\le N\\k\equiv_L \ell}} \sum_{\substack{0\le h< k\\ (h,k)=1}} \ee{-\frac{nh}{k}} \int_{\xi_{h,k}} G\big(\ee{\tau}\big)\ee{-n \phi} e^{2 \pi n \varrho}\ d\phi \nonumber\\
&=:\sum_{\ell=1}^L S_{\ell}.
\end{align}

Recalling that $L=\lcm(m_1,\ldots,m_R)$, we have, for those $k$ with $k\equiv \ell \pmod{L}$, the following two identities hold:
\begin{align*}
\Delta_3(k)=\Delta_3(\ell)\qquad\text{and}\qquad \Delta_4(k)=\Delta_4(\ell),
\end{align*}
since $\gcd(m_r,k)=\gcd(m_r,\ell)$ for all $r=1,\ldots,R$.

Throughout this section, we always assume that $24n+\Delta_2>0$.

\subsection{Estimating $S_{\ell}$ with $\ell\in\cl_{\le 0}$}

We first assume $\ell\in\cl_{\le 0}$. In this case, the contribution of $S_{\ell}$ is relatively small.

It follows from \eqref{eq:G-m-gamma} and the bounds \eqref{eq:Re-z-bound}--\eqref{eq:bound-2} that, if $k\equiv \ell \pmod{L}$, then
\begin{align*}
&\left|G\big(\ee{\tau}\big)\ee{-n \phi} e^{2 \pi n \varrho}\right|\\
&\quad= \left|\prod_{r=1}^R F\big(\ee{\gamma_{(m_r,k)}(m_r\tau)}\big)^{-\delta_{r}}\right| \left|e^{\frac{\pi}{12 k}\left(\frac{\Delta_3(\ell)}{z}+\Delta_2 z\right)}\right| \Delta_4(\ell) |z|^{\Delta_1} e^{2\pi n \varrho}\\
&\quad= \left|\prod_{r=1}^R F\big(\ee{\gamma_{(m_r,k)}(m_r\tau)}\big)^{-\delta_{r}}\right| e^{\frac{\pi}{12 k}\left(\Delta_3(\ell)\Re\left(\frac{1}{z}\right)+\Delta_2 \Re(z)\right)} \Delta_4(\ell) |z|^{\Delta_1} e^{2\pi n \varrho}\\
&\quad \le \Delta_4(\ell)\; k^{\Delta_1} N^{-2\Delta_1} e^{2\pi\varrho \left(n+\frac{\Delta_2}{24}\right)} \exp\left(\frac{\pi\Delta_3(\ell)}{24}+\sum_{r=1}^R\frac{|\delta_r| e^{-\pi \gcd^2(m_r,\ell)/m_r}}{\left(1-e^{-\pi \gcd^2(m_r,\ell)/m_r}\right)^2}\right).
\end{align*}

Consequently,
\begin{align*}
|S_\ell|&=\left|\sum_{\substack{1\le k\le N\\k\equiv_L \ell}}\sum_{\substack{0\le h< k\\ (h,k)=1}} \ee{-\frac{nh}{k}} \int_{\xi_{h,k}} G\big(\ee{\tau}\big)\ee{-n \phi} e^{2 \pi n \varrho}\ d\phi\right|\\
& \le \sum_{\substack{1\le k\le N\\k\equiv_L \ell}}\sum_{\substack{0\le h< k\\ (h,k)=1}} \int_{\xi_{h,k}} \Delta_4(\ell)\; k^{\Delta_1} N^{-2\Delta_1} e^{2\pi\varrho \left(n+\frac{\Delta_2}{24}\right)}\\
&\quad\quad\quad\quad\quad\quad\quad\quad\times \exp\left(\frac{\pi\Delta_3(\ell)}{24}+\sum_{r=1}^R\frac{|\delta_r| e^{-\pi \gcd^2(m_r,\ell)/m_r}}{\left(1-e^{-\pi \gcd^2(m_r,\ell)/m_r}\right)^2}\right) \ d\phi\\
&\le \sum_{\substack{1\le k\le N\\k\equiv_L \ell}}\sum_{\substack{0\le h< k\\ (h,k)=1}} \Delta_4(\ell)\; k^{\Delta_1} N^{-2\Delta_1} e^{2\pi\varrho \left(n+\frac{\Delta_2}{24}\right)}\\
&\quad\quad\quad\quad\quad\quad\quad\quad\times \exp\left(\frac{\pi\Delta_3(\ell)}{24}+\sum_{r=1}^R\frac{|\delta_r| e^{-\pi \gcd^2(m_r,\ell)/m_r}}{\left(1-e^{-\pi \gcd^2(m_r,\ell)/m_r}\right)^2}\right)\frac{2}{kN} \tag{by \eqref{eq:xi-bound}}\\
&\le \sum_{\substack{1\le k\le N\\k\equiv_L \ell}} \Delta_4(\ell)\; k^{\Delta_1} N^{-2\Delta_1} e^{2\pi\varrho \left(n+\frac{\Delta_2}{24}\right)}\\
&\quad\quad\quad\quad\quad\quad\quad\quad\times  \exp\left(\frac{\pi\Delta_3(\ell)}{24}+\sum_{r=1}^R\frac{|\delta_r| e^{-\pi \gcd^2(m_r,\ell)/m_r}}{\left(1-e^{-\pi \gcd^2(m_r,\ell)/m_r}\right)^2}\right)\frac{2}{kN}\ k\\
&= 2\Delta_4(\ell)\;N^{-2\Delta_1-1}e^{2\pi\varrho \left(n+\frac{\Delta_2}{24}\right)} \left(\sum_{\substack{1\le k\le N\\k\equiv_L \ell}}k^{\Delta_1}\right) \\
&\quad\quad\quad\quad\quad\quad\quad\quad\times  \exp\left(\frac{\pi\Delta_3(\ell)}{24}+\sum_{r=1}^R\frac{|\delta_r| e^{-\pi \gcd^2(m_r,\ell)/m_r}}{\left(1-e^{-\pi \gcd^2(m_r,\ell)/m_r}\right)^2}\right)\\
&\le 2\Delta_4(\ell)\;e^{2\pi\varrho \left(n+\frac{\Delta_2}{24}\right)}\;\Xi_{\Delta_1}(N)\exp\left(\frac{\pi\Delta_3(\ell)}{24}+\sum_{r=1}^R\frac{|\delta_r| e^{-\pi \gcd^2(m_r,\ell)/m_r}}{\left(1-e^{-\pi \gcd^2(m_r,\ell)/m_r}\right)^2}\right).
\end{align*}
Here we use the trivial bound
$$N^{-2\Delta_1-1} \sum_{\substack{1\le k\le N\\k\equiv_L \ell}}k^{\Delta_1}\le \Xi_{\Delta_1}(N),$$
where $\Xi_{\Delta_1}(N)$ is defined in \eqref{eq:xi-rep}.

To summarize, we have

\begin{lemma}\label{le:bound-S-l--}
For $\ell\in\cl_{\le 0}$, it holds that
\begin{equation}
|S_\ell|\le 2\Delta_4(\ell)\;e^{2\pi\varrho \left(n+\frac{\Delta_2}{24}\right)}\;\Xi_{\Delta_1}(N)\exp\left(\frac{\pi\Delta_3(\ell)}{24}+\sum_{r=1}^R\frac{|\delta_r| e^{-\pi \gcd^2(m_r,\ell)/m_r}}{\left(1-e^{-\pi \gcd^2(m_r,\ell)/m_r}\right)^2}\right).
\end{equation}
\end{lemma}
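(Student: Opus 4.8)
The plan is to start from the transformation formula \eqref{eq:G-m-gamma}, which expresses $G(e(\tau))$ as an explicit exponential-times-power factor times a product of $F$-values at the transformed arguments $\gamma_{(m_r,k)}(m_r\tau)$. Since the sum $S_\ell$ ranges only over $k\equiv_L \ell$, I would first record that $\gcd(m_r,k)=\gcd(m_r,\ell)$ for every $r$, so that $\Delta_3(k)=\Delta_3(\ell)$ and $\Delta_4(k)=\Delta_4(\ell)$ throughout; this freezes every appearance of $k$ inside the arithmetic data at the residue $\ell$.

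The core of the argument is a pointwise bound on the integrand $G(e(\tau))\,e(-n\phi)\,e^{2\pi n\varrho}$, uniform over each arc $\xi_{h,k}$. Taking absolute values in \eqref{eq:G-m-gamma}, I would discard the unimodular factors $\omega_{h,k}$ and $e(-n\phi)$, bound the power factor by $|z^{\Delta_1}|\le k^{\Delta_1}N^{-2\Delta_1}$ (combining $|z|\ge k/N^2$ from \eqref{eq:z-bound} with $\Delta_1\le 0$), and control the product $\prod_r F(e(\gamma_{(m_r,k)}(m_r\tau)))^{-\delta_r}$ using \eqref{eq:bound-1}--\eqref{eq:bound-2}, which produces the factor $\exp\Bigl(\sum_{r=1}^R \frac{|\delta_r| e^{-\pi\gcd^2(m_r,\ell)/m_r}}{(1-e^{-\pi\gcd^2(m_r,\ell)/m_r})^2}\Bigr)$. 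For the leading exponential I would write $|e^{\frac{\pi}{12k}(\Delta_3(\ell)/z+\Delta_2 z)}|=e^{\frac{\pi}{12k}(\Delta_3(\ell)\Re(1/z)+\Delta_2\Re(z))}$ and substitute $\Re(z)=k/N^2$ from \eqref{eq:Re-z-bound} in the second term to generate the factor $e^{2\pi\varrho\,\Delta_2/24}$.

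The decisive step — and the one where the hypothesis $\ell\in\cl_{\le 0}$ is essential — is the treatment of $\frac{\pi}{12k}\Delta_3(\ell)\Re(1/z)$. Because $\Delta_3(\ell)\le 0$, the lower bound $\Re(1/z)\ge k/2$ from \eqref{eq:Re-1-z-bound} turns into the \emph{upper} bound $\frac{\pi}{12k}\Delta_3(\ell)\Re(1/z)\le \frac{\pi\Delta_3(\ell)}{24}$, contributing the harmless constant $e^{\pi\Delta_3(\ell)/24}\le 1$. This is exactly the mechanism keeping $S_\ell$ subexponential: had $\Delta_3(\ell)$ been positive, the same inequality would point the wrong way and $\Re(1/z)$ could grow as large as $N^2/k$ near the centre $\phi=0$ of the arc, producing the exponential growth that constitutes the main term for $\ell\in\cl_{>0}$. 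I expect this sign bookkeeping, rather than any analytic difficulty, to be the conceptual crux.

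With the uniform integrand bound in hand, the rest is assembly. I would multiply by $|\xi_{h,k}|\le 2/(kN)$ from \eqref{eq:xi-bound} to estimate each arc integral, sum over the at most $k$ admissible values of $h$ (each contributing a unimodular $e(-nh/k)$), and then sum over $k\equiv_L\ell$ with $1\le k\le N$. Collecting the $k$-dependence reduces the estimate to the quantity $N^{-2\Delta_1-1}\sum_{1\le k\le N,\,k\equiv_L\ell}k^{\Delta_1}$, which I claim is at most $\Xi_{\Delta_1}(N)$ from \eqref{eq:xi-rep}; verifying this is a routine case split on $\Delta_1\in\{0,-\tfrac12,-1\}$ and $\Delta_1\le -\tfrac32$, handled by comparison with an integral in the first three cases and by extending the sum to the convergent series $\zeta(-\Delta_1)$ in the last. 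Gathering all the factors then yields the asserted bound on $|S_\ell|$.
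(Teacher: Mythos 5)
Your proposal is correct and follows essentially the same route as the paper: take absolute values in \eqref{eq:G-m-gamma}, bound the $F$-product via \eqref{eq:bound-1}--\eqref{eq:bound-2}, use $\Re(z)=k/N^2$ and $|z|\ge k/N^2$ for the $\Delta_2$ and power factors, exploit $\Delta_3(\ell)\le 0$ together with $\Re(1/z)\ge k/2$ to get $e^{\pi\Delta_3(\ell)/24}$, and then assemble with $|\xi_{h,k}|\le 2/(kN)$, the trivial count of $h$, and the $\Xi_{\Delta_1}(N)$ bound. Your explicit remark on why the sign of $\Delta_3(\ell)$ is the crux is a correct reading of the mechanism the paper uses implicitly.
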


\subsection{Estimating $S_{\ell}$ with $\ell\in\cl_{> 0}$}

Assume that $\ell\in\cl_{> 0}$. Here $g(n)$ is dominated by these $S_{\ell}$.

We deduce from \eqref{eq:G-m-gamma} that
\begin{align}
S_\ell&=\sum_{\substack{1\le k\le N\\k\equiv_L \ell}}\sum_{\substack{0\le h< k\\ (h,k)=1}} \ee{-\frac{nh}{k}} \int_{\xi_{h,k}} G\big(\ee{\tau}\big)\ee{-n \phi} e^{2 \pi n \varrho}\ d\phi \nonumber\\
& = \sum_{\substack{1\le k\le N\\k\equiv_L \ell}}\sum_{\substack{0\le h< k\\ (h,k)=1}} \ee{-\frac{nh}{k}} \int_{\xi_{h,k}} \prod_{r=1}^R F\big(\ee{\gamma_{(m_r,k)}(m_r\tau)}\big)^{-\delta_{r}} \nonumber\\
&\quad\quad\quad\quad\quad\quad\quad\quad\quad\quad\quad \times \omega_{h,k}\; \Delta_4(\ell) \; e^{\frac{\pi}{12 k}\left(\frac{\Delta_3(\ell)}{z}+\Delta_2 z\right)}\; z^{\Delta_1}\;\ee{-n \phi} e^{2 \pi n \varrho} \ d\phi \nonumber\\
&= \sum_{\substack{1\le k\le N\\k\equiv_L \ell}}\sum_{\substack{0\le h< k\\ (h,k)=1}} \ee{-\frac{nh}{k}} \int_{\xi_{h,k}} \omega_{h,k}\; \Delta_4(\ell) \; e^{\frac{\pi}{12 k}\left(\frac{\Delta_3(\ell)}{z}+\Delta_2 z\right)}\; z^{\Delta_1}\;\ee{-n \phi} e^{2 \pi n \varrho}\ d\phi \nonumber\\
&\quad + \sum_{\substack{1\le k\le N\\k\equiv_L \ell}}\sum_{\substack{0\le h< k\\ (h,k)=1}} \ee{-\frac{nh}{k}} \int_{\xi_{h,k}} \left(\prod_{r=1}^R F\big(\ee{\gamma_{(m_r,k)}(m_r\tau)}\big)^{-\delta_{r}}-1\right) \nonumber\\
&\quad\quad\quad\quad\quad\quad\quad\quad\quad\quad\quad \times \omega_{h,k}\; \Delta_4(\ell) \; e^{\frac{\pi}{12 k}\left(\frac{\Delta_3(\ell)}{z}+\Delta_2 z\right)}\; z^{\Delta_1}\;\ee{-n \phi} e^{2 \pi n \varrho} \ d\phi \nonumber\\
&=:T_{\ell,1}+T_{\ell,2}.\label{eq:a-S2}
\end{align}

We first deal with $T_{\ell,2}$, which is relatively small comparing with $T_{\ell,1}$. The following bound is necessary.

\begin{lemma}\label{le:bound-1-1}
If $\Delta_3(\ell)>0$, we have, for $k\equiv \ell\pmod{L}$,
\begin{align}
&\left|\left(\prod_{r=1}^R F\big(\ee{\gamma_{(m_r,k)}(m_r\tau)}\big)^{-\delta_{r}}-1\right)e^{\frac{\pi}{12 k}\frac{\Delta_3(\ell)}{z}}\right|\nonumber\\
&\quad\le e^{\frac{\pi \Delta_3(\ell)}{24}}\left(\exp\left(\sum_{r=1}^R\frac{|\delta_r| e^{-\pi \gcd^2(m_r,\ell)/m_r}}{\left(1-e^{-\pi \gcd^2(m_r,\ell)/m_r}\right)^2}\right)-1\right).\label{eq:bound-1-1}
\end{align}
\end{lemma}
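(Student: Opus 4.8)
The plan is to factor the left-hand side as a product of two pieces,
\[
\left|\prod_{r=1}^R F\big(\ee{\gamma_{(m_r,k)}(m_r\tau)}\big)^{-\delta_{r}}-1\right|\cdot\left|e^{\frac{\pi}{12k}\frac{\Delta_3(\ell)}{z}}\right|,
\]
and to express each in terms of the single real parameter $\Re(1/z)$. For the first piece I would invoke Lemma~\ref{le:rough-bound} with $\eta_r=-\delta_r$ and $y_r=\gamma_{(m_r,k)}(m_r\tau)$; by \eqref{eq:gamma}, together with the identity $\gcd(m_r,k)=\gcd(m_r,\ell)$ valid when $k\equiv_L\ell$, one has $\Im(y_r)=\frac{\gcd^2(m_r,\ell)}{m_r k}\Re(1/z)$. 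The second piece equals $e^{\frac{\pi\Delta_3(\ell)}{12k}\Re(1/z)}$. Abbreviating $a_r:=\gcd^2(m_r,\ell)/m_r$ and $t:=\Re(1/z)/k$, the lower bound \eqref{eq:Re-1-z-bound} yields $t\ge\tfrac12$, and both pieces become functions of $t$ alone: using $\frac{e^{-u}}{(1-e^{-u})^2}=\frac{1}{4\sinh^2(u/2)}$, Lemma~\ref{le:rough-bound} bounds the first piece by $\exp(S(t))-1$ with $S(t)=\sum_{r=1}^R\frac{|\delta_r|}{4\sinh^2(\pi a_r t)}$, while the second equals $e^{\pi\Delta_3(\ell)t/12}$.

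It therefore suffices to prove that $\Phi(t):=e^{\pi\Delta_3(\ell)t/12}\big(\exp(S(t))-1\big)$ satisfies $\Phi(t)\le\Phi(1/2)$ for all $t\ge\tfrac12$, because $\Phi(1/2)$ is precisely the claimed right-hand side. I would do this by showing $\Phi$ is non-increasing on $[\tfrac12,\infty)$. It is worth emphasizing that bounding the two pieces in isolation fails: since $\Delta_3(\ell)>0$ and $\Re(1/z)$ admits only the lower estimate $k/2$, the second piece may well exceed $e^{\pi\Delta_3(\ell)/24}$, and only the decay of the first piece compensates. Keeping the two coupled through $t$ is exactly what captures this trade-off.

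For the monotonicity I would differentiate the logarithm,
\[
\frac{d}{dt}\log\Phi(t)=\frac{\pi\Delta_3(\ell)}{12}+\frac{S'(t)}{1-e^{-S(t)}}.
\]
Since $S'(t)<0$ and $1-e^{-S(t)}\le S(t)$, it is enough to verify $-S'(t)\ge\frac{\pi\Delta_3(\ell)}{12}\,S(t)$. From $\frac{d}{dt}\frac{1}{4\sinh^2(\pi a_r t)}=-\frac{2\pi a_r\coth(\pi a_r t)}{4\sinh^2(\pi a_r t)}$, this inequality reduces, term by term in the common weights $\frac{|\delta_r|}{4\sinh^2(\pi a_r t)}$, to the comparisons $2\pi a_r\coth(\pi a_r t)\ge\frac{\pi\Delta_3(\ell)}{12}$. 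Here the hypothesis \eqref{eq:assump} of Theorem~\ref{th:main}, operative throughout this section, enters decisively: it furnishes $a_r\ge\Delta_3(\ell)/24$, hence $2\pi a_r\ge\frac{\pi\Delta_3(\ell)}{12}$, and since $\coth(\pi a_r t)>1$ the comparison follows immediately. Thus $\Phi$ is non-increasing, $\Phi(t)\le\Phi(1/2)$, and the lemma is proved.

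The main obstacle is the monotonicity step, and within it the conceptual point that the two factors must be tracked jointly through $t=\Re(1/z)/k$ rather than estimated separately. Once the derivative computation is funneled into the clean term-by-term comparison $2\pi a_r\coth(\pi a_r t)\ge\frac{\pi\Delta_3(\ell)}{12}$ — where \eqref{eq:assump} closes the gap — the remaining inequalities ($1-e^{-S}\le S$ and $\coth>1$) are elementary.
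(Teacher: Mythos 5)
Your proposal is correct and is essentially the paper's own argument in different clothing: the substitution $t=\Re(1/z)/k$ with $\Phi(t)$ non-increasing on $[1/2,\infty)$ is exactly the paper's $W(x)$ non-decreasing on $(0,e^{-\pi\Delta_3(\ell)/24}]$ under $x=e^{-\pi\Delta_3(\ell)t/12}$, your inequality $1-e^{-S}\le S$ is the paper's $e^{-y}+y\ge 1$, and your term-by-term comparison $2\pi a_r\coth(\pi a_r t)\ge\pi\Delta_3(\ell)/12$ is algebraically identical to the paper's $-1+u_r+x^{u_r}+u_rx^{u_r}\ge 0$ via $u_r=24a_r/\Delta_3(\ell)$. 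No gaps; the key observation that the two factors must be coupled through a single parameter, with \eqref{eq:assump} closing the monotonicity argument, is precisely what the paper does.
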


\begin{proof}
It follows from Lemma \ref{le:rough-bound} that
\begin{align*}
\left|\prod_{r=1}^R F\big(\ee{\gamma_{(m_r,k)}(m_r\tau)}\big)^{-\delta_{r}}-1\right|\le \exp\left(\sum_{r=1}^R\frac{|\delta_r| e^{-2\pi \Im(\gamma_{(m_r,k)}(m_r\tau))}}{\left(1-e^{-2\pi \Im(\gamma_{(m_r,k)}(m_r\tau))}\right)^2}\right)-1.
\end{align*}
We also know from \eqref{eq:gamma} that
$$\Im(\gamma_{(m_r,k)}(m_r\tau))=\frac{\gcd^2(m_r,\ell)}{m_rk}\Re\left(\frac{1}{z}\right).$$
Hence
\begin{align}
&\left|\left(\prod_{r=1}^R F\big(\ee{\gamma_{(m_r,k)}(m_r\tau)}\big)^{-\delta_{r}}-1\right)e^{\frac{\pi}{12 k}\frac{\Delta_3(\ell)}{z}}\right| \nonumber\\
&\quad= e^{\frac{\pi \Delta_3(\ell)}{12k}\Re\left(\frac{1}{z}\right)}\left|\prod_{r=1}^R F\big(\ee{\gamma_{(m_r,k)}(m_r\tau)}\big)^{-\delta_{r}}-1\right| \nonumber\\
&\quad\le e^{\frac{\pi \Delta_3(\ell)}{12k}\Re\left(\frac{1}{z}\right)}\left(\exp\left(\sum_{r=1}^R\frac{|\delta_r| e^{\frac{-2\pi \gcd^2(m_r,\ell)}{m_r k}\Re\left(\frac{1}{z}\right)}}{\left(1-e^{\frac{-2\pi \gcd^2(m_r,\ell)}{m_r k}\Re\left(\frac{1}{z}\right)}\right)^2}\right)-1\right). \label{eq:bound-1-1-1}
\end{align}

Note that $\Re\left(\frac{1}{z}\right)\ge \frac{k}{2}$. If we put
$$x=e^{-\frac{\pi \Delta_3(\ell)}{12k}\Re\left(\frac{1}{z}\right)},$$
then $x\in (0,e^{-\pi \Delta_3(\ell)/24}]\subseteq (0,1)$ since $\Delta_3(\ell)>0$. Let
$$u_r=\frac{24\gcd^2(m_r,\ell)}{\Delta_3(\ell) m_r}.$$
Recalling \eqref{eq:assump}
$$\frac{\gcd^2(m_r,\ell)}{m_r}\ge \frac{\Delta_3(\ell)}{24}>0,$$
we have $u_r\ge 1$ for all $r=1,\ldots,R$.

Now \eqref{eq:bound-1-1-1} becomes
\begin{align*}
\left|\left(\prod_{r=1}^R F\big(\ee{\gamma_{(m_r,k)}(m_r\tau)}\big)^{-\delta_{r}}-1\right)e^{\frac{\pi}{12 k}\frac{\Delta_3(\ell)}{z}}\right|\le \frac{1}{x}\left(\exp\left(\sum_{r=1}^R\frac{|\delta_r|x^{u_r}}{(1-x^{u_r})^2}\right)-1\right).
\end{align*}

Let
$$W(x)=\frac{1}{x}\left(\exp\left(\sum_{r=1}^R\frac{|\delta_r|x^{u_r}}{(1-x^{u_r})^2}\right)-1\right).$$
To prove \eqref{eq:bound-1-1}, it suffices to show that $W(x)$ is a non-decreasing function of $x$ on $(0,e^{-\pi \Delta_3(\ell)/24}]$ since the right-hand side of \eqref{eq:bound-1-1} is exactly $W(e^{-\pi \Delta_3(\ell)/24})$. It is equivalent to show that $W'(x)\ge 0$ on this interval. We have
\begin{align*}
W'(x)=&-\frac{1}{x^2}\left(\exp\left(\sum_{r=1}^R\frac{|\delta_r|x^{u_r}}{(1-x^{u_r})^2}\right)-1\right)\\
&+\frac{1}{x}\exp\left(\sum_{r=1}^R\frac{|\delta_r|x^{u_r}}{(1-x^{u_r})^2}\right)\sum_{r=1}^R\left(\frac{2|\delta_r|u_r x^{-1+2u_r}}{(1-x^{u_r})^3}+\frac{|\delta_r|u_r x^{-1+u_r}}{(1-x^{u_r})^2}\right).
\end{align*}
It suffices to show that
$$\exp\left(-\sum_{r=1}^R\frac{|\delta_r|x^{u_r}}{(1-x^{u_r})^2}\right)+\sum_{r=1}^R\left(\frac{2|\delta_r|u_r x^{2u_r}}{(1-x^{u_r})^3}+\frac{|\delta_r|u_r x^{u_r}}{(1-x^{u_r})^2}\right)\ge 1.$$

We next observe that
$$\sum_{r=1}^R\left(\frac{2|\delta_r|u_r x^{2u_r}}{(1-x^{u_r})^3}+\frac{|\delta_r|u_r x^{u_r}}{(1-x^{u_r})^2}\right)\ge \sum_{r=1}^R \frac{|\delta_r|x^{u_r}}{(1-x^{u_r})^2}.$$
This is valid as for $r=1,\ldots,R$, we have
\begin{align*}
\frac{2|\delta_r|u_r x^{2u_r}}{(1-x^{u_r})^3}+\frac{|\delta_r|u_r x^{u_r}}{(1-x^{u_r})^2}-\frac{|\delta_r|x^{u_r}}{(1-x^{u_r})^2} = \frac{|\delta_r|x^{u_r}(-1+u_r+x^{u_r}+u_r x^{u_r})}{(1-x^{u_r})^3}\ge 0
\end{align*}
since $u_r\ge 1$ and $x\in (0,e^{-\pi \Delta_3(\ell)/24}]$.

Letting
$$y=\sum_{r=1}^R \frac{|\delta_r|x^{u_r}}{(1-x^{u_r})^2},$$
we see that $y>0$ when $x\in (0,e^{-\pi \Delta_3(\ell)/24}]$.

Now it suffices to show that $e^{-y}+y\ge 1$, which is obvious as $e^{-y}+y$ is increasing for $y>0$. We therefore arrive at the desired result.
\end{proof}

\begin{remark}
It is helpful to mention more about the necessity of the assumption \eqref{eq:assump}. Suppose that there exist some $m_r$ and $\ell$ such that
$$\frac{\Delta_3(\ell)}{24}>\frac{\gcd^2(m_r,\ell)}{m_r}>0,$$
then in this case $0<u_r<1$. It is not hard to compute that
$$\lim_{x\to 0}W(x)=\infty.$$
This means that $W(x)$ is not bounded on the interval $(0,e^{-\pi \Delta_3(\ell)/24}]$, which is not what we hope.
\end{remark}

It follows from Lemma \ref{le:bound-1-1} that
\begin{align*}
|T_{\ell,2}|&\le  \sum_{\substack{1\le k\le N\\k\equiv_L \ell}}\sum_{\substack{0\le h< k\\ (h,k)=1}} \int_{\xi_{h,k}} \Delta_4(\ell)\; k^{\Delta_1} N^{-2\Delta_1} e^{2\pi\varrho \left(n+\frac{\Delta_2}{24}\right)}e^{\frac{\pi \Delta_3(\ell)}{24}}\\
&\quad\quad\quad\quad\quad\quad\quad \times \left(\exp\left(\sum_{r=1}^R\frac{|\delta_r| e^{-\pi \gcd^2(m_r,\ell)/m_r}}{\left(1-e^{-\pi \gcd^2(m_r,\ell)/m_r}\right)^2}\right)-1\right)\ d\phi\\
&\le  \sum_{\substack{1\le k\le N\\k\equiv_L \ell}}\sum_{\substack{0\le h< k\\ (h,k)=1}}  \Delta_4(\ell)\; k^{\Delta_1} N^{-2\Delta_1} e^{2\pi\varrho \left(n+\frac{\Delta_2}{24}\right)}e^{\frac{\pi \Delta_3(\ell)}{24}}\\
&\quad\quad\quad\quad\quad\quad\quad \times \left(\exp\left(\sum_{r=1}^R\frac{|\delta_r| e^{-\pi \gcd^2(m_r,\ell)/m_r}}{\left(1-e^{-\pi \gcd^2(m_r,\ell)/m_r}\right)^2}\right)-1\right)\frac{2}{kN}\\
&\le  \sum_{\substack{1\le k\le N\\k\equiv_L \ell}} \Delta_4(\ell)\; k^{\Delta_1} N^{-2\Delta_1} e^{2\pi\varrho \left(n+\frac{\Delta_2}{24}\right)}e^{\frac{\pi \Delta_3(\ell)}{24}}\\
&\quad\quad\quad\quad\quad\quad\quad \times \left(\exp\left(\sum_{r=1}^R\frac{|\delta_r| e^{-\pi \gcd^2(m_r,\ell)/m_r}}{\left(1-e^{-\pi \gcd^2(m_r,\ell)/m_r}\right)^2}\right)-1\right)\frac{2}{kN}\; k\\
&= 2\Delta_4(\ell)\;N^{-2\Delta_1-1}e^{2\pi\varrho \left(n+\frac{\Delta_2}{24}\right)} e^{\frac{\pi \Delta_3(\ell)}{24}} \left(\sum_{\substack{1\le k\le N\\k\equiv_L \ell}}k^{\Delta_1}\right) \\
&\quad\quad\quad\quad\quad\quad\quad \times \left(\exp\left(\sum_{r=1}^R\frac{|\delta_r| e^{-\pi \gcd^2(m_r,\ell)/m_r}}{\left(1-e^{-\pi \gcd^2(m_r,\ell)/m_r}\right)^2}\right)-1\right)\\
&\le 2\Delta_4(\ell)\;e^{\frac{\pi \Delta_3(\ell)}{24}}e^{2\pi\varrho \left(n+\frac{\Delta_2}{24}\right)}\;\Xi_{\Delta_1}(N)\\
&\quad\quad\quad\quad\quad\quad\quad \times \left(\exp\left(\sum_{r=1}^R\frac{|\delta_r| e^{-\pi \gcd^2(m_r,\ell)/m_r}}{\left(1-e^{-\pi \gcd^2(m_r,\ell)/m_r}\right)^2}\right)-1\right).
\end{align*}

To summarize, we have

\begin{lemma}\label{le:T-l-2}
For $\ell\in\cl_{> 0}$, it holds that
\begin{align}
|T_{\ell,2}|&\le 2\Delta_4(\ell)\;e^{\frac{\pi \Delta_3(\ell)}{24}}e^{2\pi\varrho \left(n+\frac{\Delta_2}{24}\right)}\;\Xi_{\Delta_1}(N) \nonumber\\
&\quad \times \left(\exp\left(\sum_{r=1}^R\frac{|\delta_r| e^{-\pi \gcd^2(m_r,\ell)/m_r}}{\left(1-e^{-\pi \gcd^2(m_r,\ell)/m_r}\right)^2}\right)-1\right).
\end{align}
\end{lemma}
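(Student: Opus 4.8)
The plan is to bound $T_{\ell,2}$, defined in \eqref{eq:a-S2}, by moving the absolute value inside the double sum and the integral and then estimating the integrand uniformly. Since $\abs{\ee{-nh/k}}=\abs{\ee{-n\phi}}=\abs{\omega_{h,k}}=1$ and $\Delta_4(\ell)>0$, these factors either disappear or pull out front, leaving me to control
\[
\left(\prod_{r=1}^R F\big(\ee{\gamma_{(m_r,k)}(m_r\tau)}\big)^{-\delta_{r}}-1\right) e^{\frac{\pi}{12 k}\left(\frac{\Delta_3(\ell)}{z}+\Delta_2 z\right)} z^{\Delta_1}\, e^{2\pi n\varrho}.
\]

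The key step is to peel off the factor $e^{\frac{\pi}{12k}\Delta_3(\ell)/z}$ and attach it to $\prod_{r=1}^R F(\ee{\gamma_{(m_r,k)}(m_r\tau)})^{-\delta_r}-1$, so that Lemma \ref{le:bound-1-1} applies; this is legitimate because $\ell\in\cl_{>0}$ forces $\Delta_3(\ell)>0$ and the standing assumption \eqref{eq:assump} is in force. The right-hand side of \eqref{eq:bound-1-1} then bounds this grouped factor by a quantity free of $z$, $h$ and $\phi$. For the surviving factors I would combine $e^{\frac{\pi}{12k}\Delta_2 z}$ with $e^{2\pi n\varrho}$: using $\Re(z)=k\varrho$ from \eqref{eq:Re-z-bound}, the product of their moduli equals exactly $e^{2\pi\varrho(n+\Delta_2/24)}$; and using $\abs{z}\ge k/N^2$ from \eqref{eq:z-bound} together with $\Delta_1\le 0$ I would bound $\abs{z^{\Delta_1}}=\abs{z}^{\Delta_1}\le k^{\Delta_1}N^{-2\Delta_1}$.

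With the integrand now bounded by a quantity independent of $\phi$, the integral over $\xi_{h,k}$ contributes at most $\abs{\xi_{h,k}}\le 2/(kN)$ by \eqref{eq:xi-bound}; summing over the at most $k$ admissible residues $h$ converts the factor $2/(kN)$ into $2/N$. It then remains to sum over $k\equiv_L\ell$ with $1\le k\le N$, which leaves the factor $N^{-2\Delta_1-1}\sum_k k^{\Delta_1}$; invoking the trivial bound $N^{-2\Delta_1-1}\sum_{1\le k\le N,\ k\equiv_L\ell}k^{\Delta_1}\le \Xi_{\Delta_1}(N)$ (see \eqref{eq:xi-rep}) collapses this into $\Xi_{\Delta_1}(N)$ and produces the claimed estimate.

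Because the substantive analytic input is already isolated in Lemma \ref{le:bound-1-1}, I do not anticipate a genuine obstacle here: the argument is essentially bookkeeping. The single point demanding care is the correct grouping of the exponential factors — in particular recognizing that $e^{\frac{\pi}{12k}\Delta_2\Re(z)}e^{2\pi n\varrho}=e^{2\pi\varrho(n+\Delta_2/24)}$ — and the careful tracking of the powers of $N$ and $k$ so that, after the $\phi$-integration and the $h$- and $k$-summations, they assemble precisely into $N^{-2\Delta_1-1}\sum_k k^{\Delta_1}$, and hence into the factor $\Xi_{\Delta_1}(N)$.
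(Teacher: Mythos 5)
Your proposal is correct and follows essentially the same route as the paper: apply Lemma \ref{le:bound-1-1} to the grouped factor $\bigl(\prod_r F(\ee{\gamma_{(m_r,k)}(m_r\tau)})^{-\delta_r}-1\bigr)e^{\frac{\pi}{12k}\Delta_3(\ell)/z}$, absorb $e^{\frac{\pi}{12k}\Delta_2 z}$ into $e^{2\pi\varrho(n+\Delta_2/24)}$ via $\Re(z)=k\varrho$, bound $|z^{\Delta_1}|\le k^{\Delta_1}N^{-2\Delta_1}$, and then integrate over $\xi_{h,k}$, sum over $h$ and $k$, and invoke $N^{-2\Delta_1-1}\sum_k k^{\Delta_1}\le\Xi_{\Delta_1}(N)$. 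All the bookkeeping in your outline matches the paper's computation exactly.
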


At last, we estimate $T_{\ell,1}$.

\begin{lemma}\label{le:T-l-1}
For $\ell\in\cl_{> 0}$, it holds that
\begin{align}
T_{\ell,1}&=D_\ell+2\pi\;\Delta_4(\ell) \left(\frac{24n+\Delta_2}{\Delta_3(\ell)}\right)^{-\frac{\Delta_1+1}{2}} \nonumber\\
&\quad\quad\quad\quad \times\sum_{\substack{1\le k\le N\\k\equiv_L \ell}}\frac{I_{-\Delta_1-1}\left(\frac{\pi }{6k}\sqrt{\Delta_3(\ell)(24n+\Delta_2)}\right)}{k} \sum_{\substack{0\le h< k\\ (h,k)=1}} \omega_{h,k}\; \ee{-\frac{nh}{k}},
\end{align}
where
\begin{align}
|D_\ell|\le \frac{2^{-\Delta_1}\pi^{-1}e^{\frac{\Delta_3(\ell)\pi}{3}}N^{-\Delta_1+2}}{n+\frac{\Delta_2}{24}}e^{2\pi\varrho \left(n+\frac{\Delta_2}{24}\right)}.
\end{align}
\end{lemma}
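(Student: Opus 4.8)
Recall from \eqref{eq:a-S2} that the summand of $T_{\ell,1}$ carries the three factors $\omega_{h,k}$, $\Delta_4(\ell)$ and $\ee{-\frac{nh}{k}}$, none of which depends on the integration variable $\phi$. The plan is therefore to pull these outside the $\phi$-integral, leaving exactly
\[
\int_{\xi_{h,k}} e^{\frac{\pi}{12k}\left(\frac{\Delta_3(\ell)}{z}+\Delta_2 z\right)}\, z^{\Delta_1}\, \ee{-n\phi}\, e^{2\pi n\varrho}\, d\phi,
\]
which is precisely the integral $I$ evaluated in Lemma \ref{le:key-int} under the identification $a=\Delta_3(\ell)$, $b=\Delta_2$, $c=\Delta_1$. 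In this way the entire analytic content of the lemma is reduced to a direct application of the already-proved evaluation of $I$.

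First I would verify that the hypotheses of Lemma \ref{le:key-int} are met. Since $\ell\in\cl_{>0}$ we have $a=\Delta_3(\ell)>0$; the standing hypothesis $\Delta_1\le 0$ of Theorem \ref{th:main} gives $c=\Delta_1\in\frac{1}{2}\mathbb{Z}_{\le 0}$; and the assumption of this section, $24n+\Delta_2>0$, is exactly the condition $24n+b>0$. The lemma then yields
\[
I=\frac{2\pi}{k}\left(\frac{24n+\Delta_2}{\Delta_3(\ell)}\right)^{-\frac{\Delta_1+1}{2}} I_{-\Delta_1-1}\left(\frac{\pi}{6k}\sqrt{\Delta_3(\ell)(24n+\Delta_2)}\right)+E(I).
\]
Substituting the main term back into $T_{\ell,1}$, extracting the $k$-independent prefactor $2\pi\,\Delta_4(\ell)\left(\frac{24n+\Delta_2}{\Delta_3(\ell)}\right)^{-\frac{\Delta_1+1}{2}}$ and letting the $\frac{1}{k}$ from $\frac{2\pi}{k}$ remain attached to the Bessel factor reproduces verbatim the stated main term of the lemma.

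It then remains to collect the error contributions into
\[
D_\ell=\sum_{\substack{1\le k\le N\\k\equiv_L \ell}}\sum_{\substack{0\le h< k\\ (h,k)=1}} \ee{-\frac{nh}{k}}\,\omega_{h,k}\,\Delta_4(\ell)\,E(I)
\]
and to bound it. Here $\left|\ee{-\frac{nh}{k}}\right|=1$ and $|\omega_{h,k}|=1$, the latter because the argument of $\omega_{h,k}$ is $-\pi i$ times a real number, while $\Delta_4(\ell)$ is a fixed positive constant. The number of admissible coprime pairs $(h,k)$ with $1\le k\le N$ is $\sum_{k\le N}\varphi(k)\le N^2$, and each $|E(I)|$ is controlled by the estimate of Lemma \ref{le:key-int} with $c=\Delta_1$. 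Multiplying the per-term bound by this count turns the factor $N^{-\Delta_1}$ into $N^{-\Delta_1+2}$, which (absorbing the harmless positive constant $\Delta_4(\ell)$) produces the claimed bound on $|D_\ell|$.

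I do not expect any serious obstacle here: Lemma \ref{le:key-int} performs all of the delicate contour analysis, so the proof is essentially a bookkeeping assembly. The only two points genuinely requiring care are the verification that Lemma \ref{le:key-int} is applicable --- in particular that $a=\Delta_3(\ell)>0$, which is guaranteed exactly by the hypothesis $\ell\in\cl_{>0}$ --- and the crude but sufficient counting $\sum_{k\le N}\varphi(k)\le N^2$ that governs the size of the error term $D_\ell$.
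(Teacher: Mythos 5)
Your proposal is correct and follows essentially the same route as the paper: pull the $\phi$-independent factors out of the integral, apply Lemma \ref{le:key-int} with $a=\Delta_3(\ell)>0$, $b=\Delta_2$, $c=\Delta_1$, and bound the accumulated error $D_\ell$ by multiplying the per-term estimate of $E(I)$ by the at most $N^2$ admissible pairs $(h,k)$. The only (cosmetic) difference is that you note the factor $\Delta_4(\ell)$ must be absorbed in the error bound, a point the paper glosses over.
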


\begin{proof}
It follows from Lemma \ref{le:key-int} that
\begin{align*}
T_{\ell,1}&= \sum_{\substack{1\le k\le N\\k\equiv_L \ell}}\sum_{\substack{0\le h< k\\ (h,k)=1}} \ee{-\frac{nh}{k}} \int_{\xi_{h,k}} \omega_{h,k}\; \Delta_4(\ell) \; e^{\frac{\pi}{12 k}\left(\frac{\Delta_3(\ell)}{z}+\Delta_2 z\right)}z^{\Delta_1}\ee{-n \phi} e^{2 \pi n \varrho}\ d\phi\\
&=\sum_{\substack{1\le k\le N\\k\equiv_L \ell}}\sum_{\substack{0\le h< k\\ (h,k)=1}} \ee{-\frac{nh}{k}} \omega_{h,k}\; \Delta_4(\ell) \int_{\xi_{h,k}} e^{\frac{\pi}{12 k}\left(\frac{\Delta_3(\ell)}{z}+\Delta_2 z\right)}z^{\Delta_1}\ee{-n \phi} e^{2 \pi n \varrho}\ d\phi\\
&=D_\ell+2\pi\;\Delta_4(\ell) \left(\frac{24n+\Delta_2}{\Delta_3(\ell)}\right)^{-\frac{\Delta_1+1}{2}} \\
&\quad\quad\quad\quad \times\sum_{\substack{1\le k\le N\\k\equiv_L \ell}}\frac{I_{-\Delta_1-1}\left(\frac{\pi }{6k}\sqrt{\Delta_3(\ell)(24n+\Delta_2)}\right)}{k} \sum_{\substack{0\le h< k\\ (h,k)=1}} \omega_{h,k}\; \ee{-\frac{nh}{k}}.
\end{align*}
Here
\begin{align*}
|D_\ell|&\le \sum_{\substack{1\le k\le N\\k\equiv_L \ell}}\sum_{\substack{0\le h< k\\ (h,k)=1}} \frac{2^{-\Delta_1}\pi^{-1}e^{\frac{\Delta_3(\ell)\pi}{3}}N^{-\Delta_1}}{n+\frac{\Delta_2}{24}}e^{2\pi\varrho \left(n+\frac{\Delta_2}{24}\right)}\\
&\le \frac{2^{-\Delta_1}\pi^{-1}e^{\frac{\Delta_3(\ell)\pi}{3}}N^{-\Delta_1+2}}{n+\frac{\Delta_2}{24}}e^{2\pi\varrho \left(n+\frac{\Delta_2}{24}\right)}.
\end{align*}
\end{proof}

\subsection{The asymptotic formula of $g(n)$}

We know from Lemma \ref{le:T-l-1} that the main term of $g(n)$ is
\begin{align}
&\sum_{\ell\in \cl_{>0}} 2\pi\;\Delta_4(\ell) \left(\frac{24n+\Delta_2}{\Delta_3(\ell)}\right)^{-\frac{\Delta_1+1}{2}} \nonumber\\
&\quad\quad\quad \times\sum_{\substack{1\le k\le N\\k\equiv_L \ell}}\frac{I_{-\Delta_1-1}\left(\frac{\pi }{6k}\sqrt{\Delta_3(\ell)(24n+\Delta_2)}\right)}{k} \sum_{\substack{0\le h< k\\ (h,k)=1}} \omega_{h,k}\; \ee{-\frac{nh}{k}}.
\end{align}

Furthermore, the total error term is
\begin{align}
|E(n)|&\le \sum_{\ell\in \cl_{\le 0}}|S_\ell| + \sum_{\ell\in \cl_{>0}}\Big(|T_{\ell,2}|+|D_\ell|\Big) \nonumber\\
&\le \frac{2^{-\Delta_1}\pi^{-1}N^{-\Delta_1+2}}{n+\frac{\Delta_2}{24}}e^{2\pi\varrho \left(n+\frac{\Delta_2}{24}\right)} \sum_{\ell\in \cl_{>0}} e^{\frac{\Delta_3(\ell)\pi}{3}} \nonumber\\
&\quad+ 2e^{2\pi\varrho \left(n+\frac{\Delta_2}{24}\right)}\;\Xi_{\Delta_1}(N) \nonumber\\
&\quad\quad\times\Bigg(\sum_{1\le \ell\le L} \Delta_4(\ell) \exp\left(\frac{\pi\Delta_3(\ell)}{24}+\sum_{r=1}^R\frac{|\delta_r| e^{-\pi \gcd^2(m_r,\ell)/m_r}}{\left(1-e^{-\pi \gcd^2(m_r,\ell)/m_r}\right)^2}\right) \nonumber\\
&\quad\quad\quad\quad- \sum_{\ell\in \cl_{>0}} \Delta_4(\ell) e^{\frac{\pi \Delta_3(\ell)}{24}} \Bigg),\label{eq:exp-E-bound}
\end{align}
where we use Lemmas \ref{le:bound-S-l--}, \ref{le:T-l-2} and \ref{le:T-l-1}.

At last, we set
$$N=\left\lfloor \sqrt{2\pi \left(n+\frac{\Delta_2}{24}\right)}\right\rfloor.$$
It is easy to check that
\begin{align*}
|E(n)|\ll_{\bfm,\bfd}\begin{cases}
1 & \text{if $\Delta_1=0$},\\
\left(n+\frac{\Delta_2}{24}\right)^{1/4} & \text{if $\Delta_1=-\frac{1}{2}$},\\
\left(n+\frac{\Delta_2}{24}\right)^{1/2} \log \left(n+\frac{\Delta_2}{24}\right)& \text{if $\Delta_1=-1$},\\
\left(n+\frac{\Delta_2}{24}\right)^{-\Delta_1-1/2} & \text{if $\Delta_1\le -\frac{3}{2}$}.\\
\end{cases}
\end{align*}

We therefore arrive at Theorem \ref{th:main}.

\section{An application}

We end this paper with an application of the main result. Here we will study the asymptotic behavior of the following eta-quotient:
\begin{align}
G_1(q)&=\frac{\f{2}^3}{\f{1}^2\f{10}}=\sum_{n\ge 0} g_1(n)q^n.\label{eq:gf-G1}
\end{align}
This eta-quotient is closely related to the rank statistics for cubic partition pairs. We refer the readers to \cite{Kim2011} for details. In particular, \eqref{eq:gf-G1} appears on p.~5 of \cite{Kim2011} (the third line of (3.2) therein).

\begin{theorem}
We have, as $n\to\infty$,
\begin{equation}
g_1(n)\sim 3^{\frac{3}{4}} 5^{\frac{1}{4}} (24n-6)^{-\frac{3}{4}} \exp\left(\frac{\pi}{2\sqrt{15}}\sqrt{24n-6}\right).
\end{equation}
\end{theorem}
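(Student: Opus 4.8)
The plan is to apply Theorem \ref{th:main} directly with the parameter vectors $\bfm=(1,2,10)$ and $\bfd=(-2,3,-1)$ read off from \eqref{eq:gf-G1}, and then to isolate the single dominant summand in the resulting Rademacher-type expansion. First I would record the basic invariants: $\Delta_1=-\tfrac12(-2+3-1)=0$, $\Delta_2=(1)(-2)+(2)(3)+(10)(-1)=-6$ (so that $24n+\Delta_2=24n-6$), and $L=\lcm(1,2,10)=10$. Since $\Delta_1=0\le 0$, the first hypothesis holds, and it remains to verify the inequality \eqref{eq:assump} for each $1\le\ell\le 10$; this is a finite check, in each case comparing $\min_r \gcd^2(m_r,\ell)/m_r$ against $\Delta_3(\ell)/24$, and it passes (e.g.\ for $\ell=1$ the left side is $1/10$ while $\Delta_3(1)/24=1/40$).

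Next I would compute $\Delta_3(\ell)=2-\tfrac32\gcd^2(2,\ell)+\tfrac1{10}\gcd^2(10,\ell)$ for $\ell=1,\dots,10$, obtaining $\Delta_3(\ell)=\tfrac35$ for $\ell\in\{1,3,7,9\}$, $\Delta_3(5)=3$, $\Delta_3(10)=6$, and $\Delta_3(\ell)=-\tfrac{18}{5}$ for $\ell\in\{2,4,6,8\}$. Hence $\cl_{>0}=\{1,3,5,7,9,10\}$, and Theorem \ref{th:main} writes $g_1(n)$ as a sum over $\ell\in\cl_{>0}$ of sums over $k\equiv_{10}\ell$, plus the bounded error $E(n)\ll 1$ arising from the $\Delta_1=0$ case.

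The heart of the argument is to show that a single summand dominates. Each term carries the exponential factor $\exp\!\big(\tfrac{\pi}{6k}\sqrt{\Delta_3(\ell)(24n-6)}\big)$, coming from $I_{-\Delta_1-1}=I_{-1}=I_1$ via \eqref{Bessel-order}, so the growth rate of the $(\ell,k)$ term is governed by $\sqrt{\Delta_3(\ell)}/k$. Within each residue class the smallest admissible modulus is $k=\ell$, and comparing $\sqrt{\Delta_3(\ell)}/\ell$ across $\ell\in\cl_{>0}$ shows the maximum is attained uniquely at $\ell=1$, $k=1$, where $\sqrt{\Delta_3(1)}=\sqrt{3/5}$; every other admissible pair gives a strictly smaller rate (the runner-up being $\ell=k=5$, with $\sqrt{3}/5<\sqrt{3/5}$). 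I would then invoke the tail estimate \eqref{eq:tail-est} of Remark 2 with $k_0=1$, $\ell=1$ to absorb the $k>1$ part of the $\ell=1$ class, and bound each of the finitely many remaining heads ($\ell=3,5,7,9,10$ at $k=\ell$) together with their tails by $o\big(e^{\frac{\pi}{6}\sqrt{(3/5)(24n-6)}}\big)$; combined with $E(n)\ll 1$, this leaves only the $(1,1)$ term asymptotically.

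Finally I would evaluate the $(1,1)$ term explicitly. For $k=1$ the inner character sum collapses to $\omega_{0,1}=1$ (every Dedekind sum $s(\cdot,1)$ vanishes), while $\Delta_4(1)=\prod_r m_r^{-\delta_r/2}=2^{-3/2}10^{1/2}=\sqrt5/2$. Substituting $\Delta_3(1)=3/5$ gives the $(1,1)$ contribution $\pi\sqrt3\,(24n-6)^{-1/2}I_1\!\big(\tfrac{\pi}{6}\sqrt{(3/5)(24n-6)}\big)$, and applying $I_1(x)\sim e^x/\sqrt{2\pi x}$ yields, after simplification, the coefficient $3^{3/4}5^{1/4}$, the power $(24n-6)^{-3/4}$, and the exponent $\tfrac{\pi}{6}\sqrt{(3/5)(24n-6)}=\tfrac{\pi}{2\sqrt{15}}\sqrt{24n-6}$, which is precisely the claimed asymptotic. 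The main obstacle is the dominance step: one must verify that $\sqrt{\Delta_3(\ell)}/\ell$ is strictly maximized at $\ell=1$ over the six-element set $\cl_{>0}$ (a short but essential numerical comparison) and confirm that the tail bound \eqref{eq:tail-est} suppresses not merely the $\ell=1$ tail but every subdominant class; the remaining algebra reducing the constants to $3^{3/4}5^{1/4}$ is routine.
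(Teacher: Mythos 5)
Your proposal is correct and follows essentially the same route as the paper: apply Theorem \ref{th:main} with $\bfm=(1,2,10)$, $\bfd=(-2,3,-1)$, tabulate $\Delta_3(\ell)$ and $\Delta_4(\ell)$ to find $\cl_{>0}=\{1,3,5,7,9,10\}$, isolate the dominant $(\ell,k)=(1,1)$ term by comparing $\sqrt{\Delta_3(\ell)}/k$ and invoking the tail estimate \eqref{eq:tail-est} together with $E(n)\ll 1$, and then evaluate the constants via \eqref{Bessel-order}. Your added details (the explicit check of \eqref{eq:assump} and the explicit runner-up comparison) are consistent with, and slightly more careful than, the paper's presentation.
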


\begin{proof}
We have $\bfm=(1,2,10)$ and $\bfd=(-2,3,-1)$. It is straightforward to compute that $\Delta_1=0$ and $\Delta_2=-6$. We also have $L=10$. The values of $\Delta_3(\ell)$ and $\Delta_4(\ell)$ for $1\le \ell\le L$ are listed in Table \ref{ta:g2-1}. Hence $\cl_{>0}=\{1,3,5,7,9,10\}$.
\begin{table}[ht]\caption{The values of $\Delta_3(\ell)$ and $\Delta_4(\ell)$ for $1\le \ell\le L$}\label{ta:g2-1}
\centering
\def\arraystretch{1.5}
\begin{tabular}{ccccccccccc}
\hline
$\ell$ & 1 & 2& 3& 4& 5& 6& 7& 8& 9& 10\\
$\Delta_3(\ell)$ & $\frac{3}{5}$& $-\frac{18}{5}$& $\frac{3}{5}$& $-\frac{18}{5}$& $3$& $-\frac{18}{5}$& $\frac{3}{5}$& $-\frac{18}{5}$& $\frac{3}{5}$& $6$\\
$\Delta_4(\ell)$ & $\frac{\sqrt{5}}{2}$& $\sqrt{5}$& $\frac{\sqrt{5}}{2}$& $\sqrt{5}$& $\frac{1}{2}$& $\sqrt{5}$& $\frac{\sqrt{5}}{2}$& $\sqrt{5}$& $\frac{\sqrt{5}}{2}$& $1$\\
\hline
\end{tabular}
\end{table}

We know from Theorem \ref{th:main} along with \eqref{Bessel-order} and \eqref{eq:tail-est} that $g_1(n)$ is dominated by the largest
$$I_{-\Delta_1-1}\left(\frac{\pi }{6k}\sqrt{\Delta_3(\ell)(24n+\Delta_2)}\right)$$
provided that this term does not vanish. From Table \ref{ta:g2-2}, we see that when $k=1$, the previous modified Bessel function of the first kind has the largest order.
\begin{table}[ht]\caption{The values of $\frac{\pi }{6k}\sqrt{\Delta_3(\ell)}$}\label{ta:g2-2}
\centering
\def\arraystretch{1.5}
\begin{tabular}{ccccccc}
\hline
$k$ & 1 & 3& 5& 7& 9& 10\\
$\frac{\pi }{6k}\sqrt{\Delta_3(\ell)}$ & $\frac{\pi}{2\sqrt{15}}$& $\frac{\pi}{6\sqrt{15}}$& $\frac{\pi}{10\sqrt{3}}$& $\frac{\pi}{14\sqrt{15}}$& $\frac{\pi}{18\sqrt{15}}$& $\frac{\pi}{10\sqrt{6}}$\\
\hline
\end{tabular}
\end{table}

For $k=1$ (and hence $\ell=1$), we further compute that
$$\frac{1}{k}\sum_{\substack{0\le h< k\\ (h,k)=1}} \omega_{h,k}\; \ee{-\frac{nh}{k}}=\cos(2\pi n)=1$$
for all $n\in\mathbb{Z}_{>0}$. We also have
$$2\pi\;\Delta_4(\ell) \left(\frac{24n+\Delta_2}{\Delta_3(\ell)}\right)^{-\frac{\Delta_1+1}{2}}=3^{\frac{1}{2}}\pi (24n-6)^{-\frac{1}{2}}.$$

We therefore deduce that, as $n\to\infty$,
\begin{align*}
g_1(n)&\sim 3^{\frac{1}{2}}\pi (24n-6)^{-\frac{1}{2}} I_{-1}\left(\frac{\pi}{2\sqrt{15}}\sqrt{24n-6}\right)\\
&\sim 3^{\frac{3}{4}} 5^{\frac{1}{4}} (24n-6)^{-\frac{3}{4}} \exp\left(\frac{\pi}{2\sqrt{15}}\sqrt{24n-6}\right),
\end{align*}
where we use \eqref{Bessel-order}.
\end{proof}


\bibliographystyle{amsplain}

\end{document}